\definecolor{light}{gray}{.95}
\numberwithin{equation}{section}
\theoremstyle{plain}
\newtheorem{teo}{Theorem}[section]
\newtheorem{lem}[teo]{Lemma}
\theoremstyle{definition}
\newtheorem{ddef}[teo]{Definition}
\newtheorem{rem}[teo]{Remark}
\renewenvironment{proof}%
{\par \vspace{0.2cm}\ \\ \noindent{ { \textsc{Proof}}\,---\ } }{\hfill{$\Box$} \par \vspace{0.2 cm}}
\theoremstyle{remark}
\newcommand{\RR}{\mathbb{R}}
\newcommand{\HH}{\mathbb{H}^n}
\renewcommand{\tilde}{\widetilde}
\newcommand{\Cs}{$C^*$-}
\begin{document}

\title{The Calderon projection over $C^*$--algebras}
\author{Paolo Antonini\thanks{ 
Projet Alg\'ebres d'op\'erateurs
Universit\'e Paris Diderot paolo.anton@gmail.com}}

\maketitle
\begin{abstract}We construct the Calderon projection on the space of Cauchy datas for a twisted Dirac operator on a compact manifold with boundary acting on a bundle of finitely generated $C^*$--Hilbert modules . In particular an invertible double is constructed in the Mischenko--Fomenko setting generalizing the classical result.
\end{abstract}
\section{Introduction}
The introduction of \Cs algebras in index theory and differential geometry initiated by Mishchenko, Fomenko, Connes, Kasparov and Moscovici 
\cite{connes,mf,kas} led to a number of applications and new insights including the establishment of the Novikov conjecture for a large class of manifolds \cite{cm} or the Connes--Skandalis general index theorem for foliations \cite{connesskandalis}. After this appearance, primary and secondary invariants of elliptic operators gained a promotion. 
Higher indices belong to the $K$--theory of a ground $C^*$--algebra while the ordinary (numerical) ones are called lower indices.
Higher invariants contain refined informations and gain stability properties from the cohomological character of the $K$-theory of operator algebras.
For example it is well known that the $C^*$--algebraic index class of the signature operator is homotopy invariant.

While the theory of elliptic operators which are invariant under the action
of \Cs algebras is nowadays well founded for closed manifolds, less is known for structures with boundary. There are at least two methods to deal with geometric operators on a manifold with boundary, doing analysis on the incomplete manifold following the paradigm of elliptic boundary value problems, or following the Melrose $b$--philosophy by looking at the associated complete manifold with cylindrical ends.
The two points of view should not be intended complementary or opposite but  integrating one each other. This is clear from the beginning and pointed out in the seminal paper by Atiyah Patodi and Singer \cite{aps}. The interplay complete/incomplete becomes essential when dealing with the topological properties of signature operator which was the original motivation of A.P.S.

 The cylindrical case in the higher setting was studied by Piazza, Leichtnam, Schick, Lott and Wahl \cite{lp1,lp2,charlotte1}. In the incomplete case the literature is still lacking. Since the spectrum of the boundary operator is no more discrete there are, in general, no A.P.S. boundary conditions in strict sense. The formulation of global elliptic boundary value problems relies on the notion of noncommutative spectral section. 
 
 In this paper we generalize the classical theory constructing a basic tool for the investigation of boundary value problems of Dirac operators acting on sections of bundles of finitely generated Hilbert modules over a $C^*$ algebra (tipically In the applications is the $C^*$--algebra of the fundamental group).
We show the existence of a nice operator called the Calderon projection. It is an order zero pseudodifferential operator in the Mishchenko--Fomenko calculus on the boundary projecting on the space of the smooth Cauchy datas. In the classical situation the whole theory is mastered by the property of this operator. Indeed not only the definition of ellipticity for boundary value problems is expressed in terms of the Calderon projection but also the index of the Fredholm realization can be computed as the relative index of the projection and the boundary condition.  We postpone applications to a future paper.
\subsubsection*{Acknowledgments}
It is a pleasure to thank Paolo Piazza for having proposed this line of research, Georges Skandalis, Francesca Arici and Charlotte Wahl for interesting discussions.

 \section{Review of the classical theory}
  \subsection{Unique continuation property}
\begin{ddef}
One says that an operator $A$ on a smooth connected manifold $M$ (also with boundary) has the \emph{unique continuation property} (U.C.P.) if every solution $s$,
$$As=0$$ which vanishes on an open set also vanishes everywhere.
\end{ddef}

It is well known that all the Dirac type operators on a manifold enjoy the U.C.P. There is a huge amount of literature on the subject. We limit ourselves to cite the exaustive book \cite{Boos} and the recent paper 
\cite{Lesch}. The crucial property of Dirac type operators $D$, which moreover distinguishes them among first order ones, is the product form:
\begin{equation}
\label{productform}
D=G(y,u)(\partial_u+B_u),
\end{equation}
for a locally deformed Riemannian structure. This is true a fortiori on a manifold with boundary \cite{aps}, with product metric. The tangential piece $B_u$ has an \underline{elli}p\underline{tic} and selfadjoint part $1/2(B_u+B_u^*)$.

For a Dirac type operator the Green formula reads:
\begin{equation}
\label{green}
\langle D s_1, s_2 \rangle - \langle s_1, D^* s_2 \rangle = - \int_{\partial M} \langle c(v)(s_1 |_{\partial M}), s_2 |_{\partial M} \rangle.
 \end{equation}
Here $c(v)$ denotes Clifford multiplication by the inward normal unit vector to $\partial M$.
Let us emphasize that on a manifold with boundary, if the metric and all the geometric datas defining the Dirac operator are product type near the boundary, then equation \eqref{productform} simplifies and the operator writes as
\begin{equation}
\label{prodformbnd}
D=G(y)(\partial_u+B),
\end{equation} 
in a collar neighborhood $N$ of the boundary, with $G$ unitary and $B$ selfadjoint, both independent on the normal variable $u$. In this case the unique continuation property near the boundary immediately follows from elementary harmonic analysis by expanding the solution in the form
 \begin{equation}\label{spectral}
 s(u,y)=\sum_{\lambda}f_{\lambda}(u)\varphi_{\lambda}(y)
 \end{equation} where ${\varphi_{\lambda}(y)}_{\lambda}$ is a spectral resolution of $B$ over the boundary. The expansion \eqref{spectral} also plays a crucial role in the original proof of the Atiyah--Patodi--Singer index formula, in particular it establishes the equivalence of the A.P.S. pseudodifferential elliptic boundary value problem with the natural $L^2$ theory on the corresponding manifold with an attached cylinder.
 
The typical application of this principle, (from \eqref{green}) is the following result
\begin{teo}Let $X=X_+ \cup X_-$ be a connected partitioned manifold with $X_+\cap X_-=\partial X_{\pm}=Y.$ Then there are no smooth \emph{ghost solutions} i.e. solutions $s$ of $As=0$ such that $s_{|Y}=0.$
\end{teo}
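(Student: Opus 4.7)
The strategy is to extend a ghost solution by zero across $Y$, show the extension is a weak (hence smooth) solution of $A$ on all of $X$ which vanishes on a non--empty open set, and then invoke U.C.P.

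Let $s$ be a smooth ghost solution, so $As=0$ and $s|_Y=0$. Define $\tilde s$ on $X$ by $\tilde s=s$ on $X_+$ and $\tilde s=0$ on $X_-$. Since $s|_Y=0$, the extension $\tilde s$ is continuous across the separating hypersurface $Y$. I claim that $A\tilde s=0$ in the distributional sense on $X$. Indeed, for any smooth test section $\varphi$ with compact support in $X$,
\begin{equation*}
\langle \tilde s,\, A^*\varphi\rangle_X \;=\; \langle s,\, A^*\varphi\rangle_{X_+},
\end{equation*}
and applying the Green formula \eqref{green} on $X_+$ (whose boundary is $Y$) yields
\begin{equation*}
\langle s,\, A^*\varphi\rangle_{X_+} \;=\; \langle A s,\,\varphi\rangle_{X_+} \;+\; \int_{Y}\langle c(v)\, s|_Y,\, \varphi|_Y\rangle \;=\; 0,
\end{equation*}
since both $As=0$ and $s|_Y=0$. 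Hence $\tilde s$ is a weak solution of $A$ on $X$.

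Because $A$ is an elliptic operator of Dirac type, interior elliptic regularity promotes the distributional solution $\tilde s$ to a smooth section on $X$. But $\tilde s$ vanishes identically on the non--empty open set $X_-\setminus Y\subset X$, so the U.C.P. applied on the connected manifold $X$ forces $\tilde s\equiv 0$ on all of $X$; in particular $s\equiv 0$ on $X_+$. Swapping the roles of $X_+$ and $X_-$ in the same argument gives $s\equiv 0$ on $X_-$ as well, which concludes the proof.

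\emph{Main obstacle.} The only non--routine point is verifying that the boundary term in the Green formula vanishes, which is precisely where the hypothesis $s|_Y=0$ enters. Once $\tilde s$ is established as a weak solution, standard interior elliptic regularity and the U.C.P. close the argument without further work.
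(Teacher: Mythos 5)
Your proof is correct and follows precisely the route the paper intends: the paper states this theorem as "the typical application" of the Green formula~\eqref{green} together with U.C.P., and in the later proof of the invertible double construction it invokes the identical extend-by-zero argument (citing Lemma~9.2 of [Boos]) to pass from a vanishing Cauchy trace to a weak solution, then uses elliptic regularity and U.C.P. to conclude. Your write-up fills in the steps the paper leaves implicit, and matches them exactly.
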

We rapidly review the proof of the classical unique continuation property. If $s$ is zero on an open set $V$ which is properly contained in $M$ one choose some $x_0\in \partial V$ and a point $p\in V$ at distance $r$ from $x_0$ such that the ball $B(p,r)$ is contained in $V$. 
One shows that $$s_{|B(p,r+T/2)}=0\quad \textrm{for some }T>0$$
In turn this follows from the Carleman estimate \cite{Boos}, holding for every arbitrary sufficiently big $R>0$
\begin{equation}
\boxed{
 R\int_{u=0}^{T}\int_{\mathbb{S}^{n-1}_{p,u}}e^{R(T-u)^2}\|v(u,y)\|^2dydu\leq C\int_{u=0}^T\int_{\mathbb{S}^{n-1}_{p,u}}\|Av(u,y)\|^2dydu}
 \end{equation} with $v(u,y):=\varphi(u)s(u,y)$ for a smooth cut off function $\varphi$ such that $\varphi_{|u<8/10T}=1$ and $\varphi_{|u>9/10T}=0.$
 The unique continuation property for Dirac operators also holds in a $C^*$ algebraic setting:
 
 \begin{teo}
 Let $X$ be a compact manifold equipped with a Clifford module bundle $\mathcal{E}$ endowed with compatible connection and $\mathcal{A}$ a bundle of finitely generated projective Hilbert modules over a unital $C^*$ algebra\footnote{we assume our $C^*$ algebras to be complex; however everything can be formulated for real $C^*$ algebras under small modification} $A$. One forms the twisted Dirac operator 
 $$D:\Gamma(X;\mathcal{E}\otimes \mathcal{A})\longrightarrow  \Gamma(X;\mathcal{E}\otimes \mathcal{A}).$$
 If a smooth section $s$ satisfies $Ds=0$ and vanishes on an open set $\Omega \subset X$ then $s=0$.
 \end{teo}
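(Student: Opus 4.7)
My plan is to reduce the $C^*$--algebraic statement to the classical unique continuation property recalled above by tensoring away the module structure via a faithful Hilbert space representation of $A$.

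Fix a faithful non--degenerate $*$--representation $\pi\colon A\hookrightarrow B(H)$ on some Hilbert space $H$ (for instance the universal representation). The interior tensor product performed fiberwise turns the bundle of finitely generated projective $A$--Hilbert modules $\mathcal{A}$ into a bundle of (possibly infinite--dimensional) Hilbert spaces $\widehat{\mathcal{A}}:=\mathcal{A}\otimes_A H$: locally, where $\mathcal{A}\cong e\cdot A^N$ for a smooth projection $e\in M_N(C^\infty(U,A))$, one has $\widehat{\mathcal{A}}\cong (\pi\otimes 1_N)(e)\cdot H^N$. For each $\xi\in H$ the assignment $s\mapsto\widehat s_\xi:=s\otimes_A\xi$ sends smooth sections of $\mathcal{E}\otimes\mathcal{A}$ to smooth sections of $\mathcal{E}\otimes\widehat{\mathcal{A}}$ and commutes with $D$, which extends to a first-order elliptic operator $\widehat D$ on $\mathcal{E}\otimes\widehat{\mathcal{A}}$ with unchanged principal symbol (Clifford multiplication on $\mathcal{E}$) and bounded operator--valued subprincipal terms coming from $\pi$ applied to the connection form of $\mathcal{A}$. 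Thus $Ds=0$ yields $\widehat D\widehat s_\xi=0$ for every $\xi$, and $s|_\Omega=0$ yields $\widehat s_\xi|_\Omega=0$.

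The key observation is that the classical proof of U.C.P.\ carries over verbatim to Dirac--type operators acting on sections of a bundle of Hilbert spaces of arbitrary dimension. The Carleman estimate (the boxed inequality in the excerpt) is established pointwise by writing $v=\varphi s$ for a cutoff $\varphi$, decomposing $v$ in the spectral resolution of the self-adjoint elliptic tangential operator $\tfrac12(B_u+B_u^*)$, and integrating by parts in the normal direction against the weight $e^{R(T-u)^2}$; all of these operations use only the scalar--valued pointwise Hilbert--space norm $\|\cdot\|_H$ and commutators of first-order differential operators with scalar multipliers, and so are insensitive to the size of the coefficient fiber. Applying the classical propagation argument of \cite{Boos} to each $\widehat s_\xi$ one obtains $\widehat s_\xi\equiv 0$ on $B(p,r+T/2)$ for every $\xi\in H$; by faithfulness of the componentwise extension of $\pi$ to $A^N$ this forces $s\equiv 0$ on $B(p,r+T/2)$, and the standard chain--of--balls argument together with the connectedness of $X$ yields $s\equiv 0$.

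The main obstacle is the verification that the classical Carleman estimate extends to Hilbert-space valued sections without any new input. This amounts to rereading the proof and checking that no step exploits the finite dimensionality of the coefficient fiber: integration by parts, Cauchy--Schwarz, and the spectral theorem for the tangential self-adjoint elliptic operator $\tfrac12(B_u+B_u^*)$ all survive the passage to operator--valued coefficients. An intrinsic alternative, avoiding any representation of $A$, is to prove an $A$--valued Carleman estimate directly as an inequality in the positive cone $A^+$; this is feasible but requires extra positivity bookkeeping (a module Rellich lemma, $A$--valued square roots) and offers no real conceptual gain for the present application.
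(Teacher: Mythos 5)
Your approach is genuinely different from the paper's, which simply invokes the Xie--Yu estimate
$\|s\|\leq C_1(\Omega)\|s_{|\Omega}\|+C_2(\Omega)\|Ds\|$
and concludes in one line: $Ds=0$ and $s|_\Omega=0$ force $\|s\|\le 0$. You instead tensor with a faithful representation $\pi\colon A\to B(H)$ and try to transfer the classical unique continuation property; the reduction scaffolding (fiberwise interior tensor product, $\widehat D\widehat s_\xi=0$, faithfulness on $eA^N$ recovering $s=0$ from $\widehat s_\xi=0$ for all $\xi$) is correct.

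The gap is the part you yourself flag as ``the main obstacle'' but do not close, and it is not a formality. You assert that the Carleman estimate ``is established pointwise by \ldots decomposing $v$ in the spectral resolution of the self-adjoint elliptic tangential operator $\tfrac12(B_u+B_u^*)$''. Once you have tensored with $H$, that tangential operator acts on sections of an \emph{infinite-rank} Hilbert bundle over a compact slice, hence no longer has compact resolvent; a discrete eigenfunction expansion of the form $\sum_\lambda f_\lambda(u)\varphi_\lambda(y)$, which is exactly what the paper uses in the easier product-cylinder case, is simply unavailable. If the Carleman proof you are invoking is structured around such an expansion, it does not ``carry over verbatim.'' A purely integration-by-parts Carleman argument with a convex weight plausibly survives, but the commutator lower bound then rests on G\aa rding-type coercivity for the tangential operator with operator-valued coefficients, and Rellich compactness (used in some versions to absorb lower-order terms) fails outright for infinite-rank bundles. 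These are precisely the kinds of estimates Xie and Yu establish intrinsically in the $C^*$-module setting, which is why the paper can afford to cite them. As written, your proposal defers the only nontrivial analytic content and gives no argument that the deferred step actually goes through; to be a proof, it would need either a worked-out operator-valued Carleman estimate (with the G\aa rding input verified for infinite-rank coefficients) or a citation to a reference proving UCP for Dirac operators on Hilbert-space bundles of arbitrary rank.
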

 \begin{proof}It immediately  follows from the fundamental estimate of Xie and Yu \cite{Xie}
 $$\|s\|\leq C_1(\Omega)\|s_{|\Omega}\|+C_2(\Omega)\|Ds\|.$$
 \end{proof}

 In this section we briefly recall, for the convenience of the reader, the properties of the standard Calderon projector.
 There is a lot of excellent literature on the subject \cite{Boos,Lesch,Lesch2,calderon}. We refer to it.
 
 \subsection{The invertible double construction}
 \label{double}
 Let us assume for simplicity that $X$ is a compact \emph{even} dimensional manifold with bounday $\partial X =Y$. This assumption will allow us to make use of the chiral notation. We denote by $D^+:\Gamma(X;\mathcal{E}^+)\longrightarrow \Gamma(X;\mathcal{E}^-)$ the positive part of a chiral Dirac operator on $X$. Assume that all structure are product near the boundary. Then, $D^+$ is in product form \eqref{prodformbnd} with unitary Green form $G$ and selfadjoint tangential part $B$. 

One can construct the doubled manifold $\widetilde{X}:=X_1 \cup_{c(du)}X_2$ attaching a reversed copy of $X$ by the Clifford multiplication along the boundary. More precisely, let $X_1=X$ and denote by $X_2:=-X$, the manifold with opposite orientation. We attach a cylinder $\partial Y \otimes (-\epsilon,0]$ to $X_1$ and $[0, \epsilon) x \partial Y$ to $X_2$, glue together the two resulting manifolds and send $\epsilon$ to zero.

The operators $G$ and $B$ anticommute, since $D$ is (formally) selfadjoint. Moreover, the unitary map sending a section $f \in \Gamma ([0,\epsilon) \times \partial X)$ to $Gf \in \Gamma((-\epsilon,0]\times \partial X )$ conjugates $D$ and $-D$.

We can glue together $D^+$ and $-D^+$ into a new operator
\[\tilde{D^+} := D^+ \cup_G -D^+\]
using $J$ as a clutching function. 

Let use introduce the corresponding bundles, $\mathcal{E}^+$ over $X_1$ and $\mathcal{E}^-$ over $X_2$ using the identification given by Clifford multiplication. Then we obtain the bundles of spinors of positive and of negative chirality

\[ \tilde{\mathcal{E}}^+ := \mathcal{E}^+ \cup_{c(v)}\mathcal{E}^- \qquad \tilde{\mathcal{E}}^- := \mathcal{E}^- \cup_{c(v)} \mathcal{E}^+.\]

\begin{figure}[h!]
\centering
  \includegraphics[width=4in]{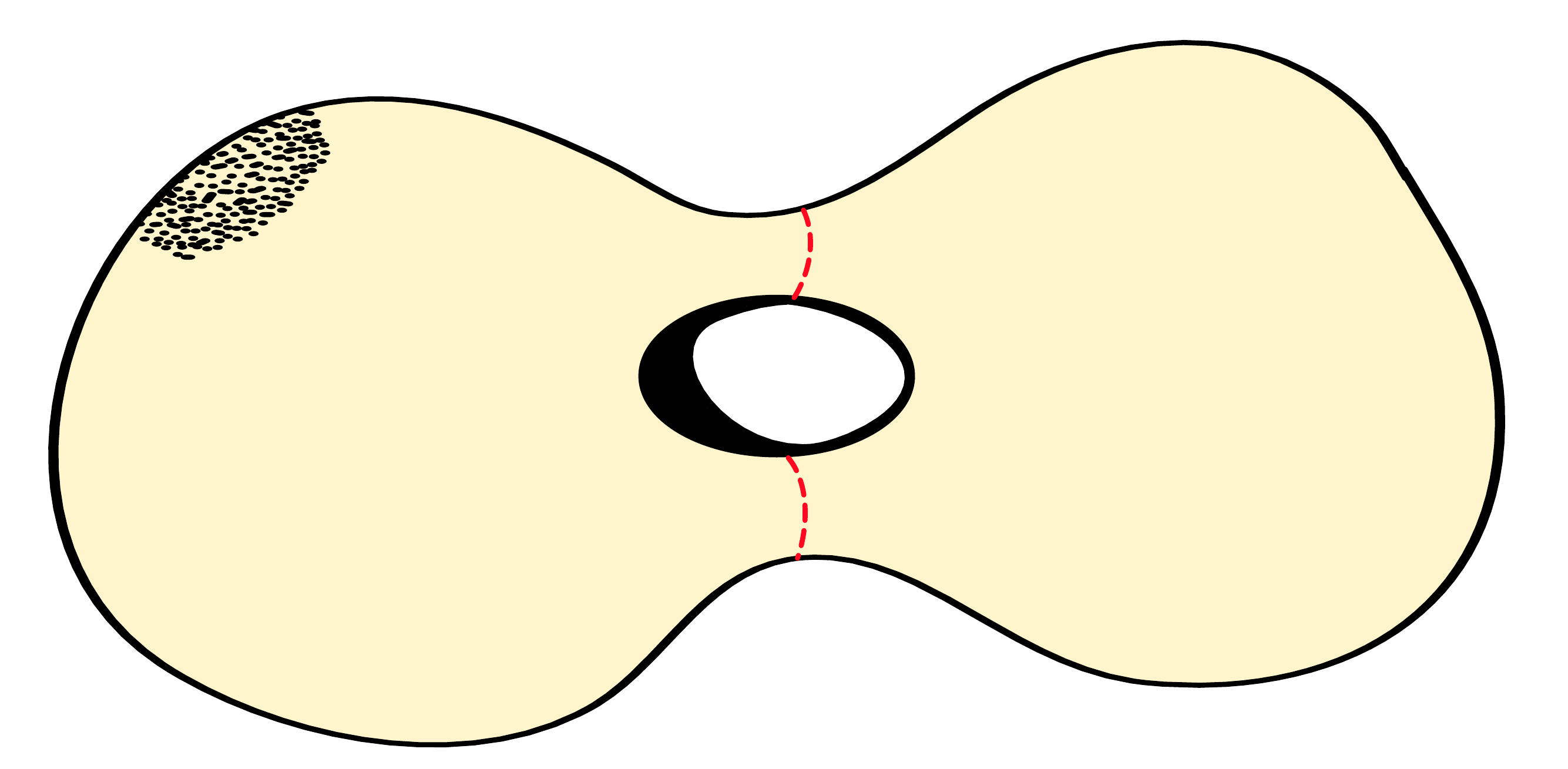}
  \caption[The doubled manifold.]
   {The doubled manifold.}
\end{figure}

So both the bundles and the operator extend to a doubled operator. The opeator is invertible due to the unique continuation property; the inverse is of course pseudodifferential of order $-1$. 

An analogous construction is performed for the negative part of the chiral Dirac opeator $D^-$ to get the total Dirac opeator $\tilde{D}$.
\subsection{The Calderon projector}


\subsubsection{Spaces of Chauchy data}
The following definition holds both for the total Dirac opeator $D$ and for its chiral compontent $D^+$.  
\begin{ddef}
Let $D: \Gamma(M, E) \rightarrow \Gamma(M, F)$
be a Dirac type operator over a partitioned manifold $X= X_1 \sqcup X_2$, $ X_1 \cap X_2 = Y$.
We define the spaces of Cauchy datas $H_{1,2}$ to be
\begin{equation}
H_{i}(D) := \lbrace u \vert Y \ \vert \ u \in \Gamma (E) , \ Du =0 \mbox{ on } X_{i} \rbrace \quad i=1,2.
\end{equation}
\end{ddef} 
These are the spaces of the traces on the boundary of smooth solutions in $X_1$ or $X_2$. 

It is easy to see from the Green formula that the space of Cauchy datas $H_1(\tilde{D})$ and $H_2(\tilde{D})$ intersect only in the zero section and are $L^2$ orthogonal. 

Let $r_{1,2}$ denote the restriction maps, $r_{i}(f_1, f_2) = f_{i}$, and $\gamma_0^{\pm}$ the trace map.
Composing the adjoint $\gamma^*$ of the trace map $\gamma_-$  with the inverse of the operator, and restricting to $X_i$ one forms the Poisson operator:
\[K_{i}:= r_i(\tilde{D})^{-1}(\gamma_0^-)^*G .\] 
The Calderon projector $\mathcal{C}_i$ is defined as the trace to the boundary composed with the Poisson operator.

Since there are non--trivial regularity issues a lot of work has to be done to show that these traces make sense and the limits in a collar of the restrictions converge in $L^2$. The delicate analysis carried out in \cite{Boos,calderon} that we shall repurpose next for \Cs bundles shows that $\mathcal{C}^+$ is a pseudodifferential idempotent projection on $H_1$ along $H_2$. 

The principal symbol of the Calderon projector is the same of the A.P.S. boundary condition i.e. the projection on the space of eigenvalues of the principal symbol of $D^+$ with positive real part. Indeed from this coincidence one can develop a theory of global elliptic boundary value problems. The boundary conditions must satisfy an assumption on the principal symbol which is formulated in terms of the principal symbol of $\mathcal{C}^+$. One of the most important results is the possibility of expressing the index of the boundary value problems in terms of purely boundary operators. More precisely if $R$ is such a boundary condition, then the $L^2$--realization of $D^+$ i.e. the unbounded operator $D_R^+$ acting on
$$\operatorname{Dom}(D_R^+):=\{u\in H^1(X):\,R(u_{|Y})=0\}$$ is Fredholm and 
$$\operatorname{ind}(D^+_R)=i(R,\mathcal{C}_+).$$ At the right--hand side we find the relative index of two projections \cite{Boos}. We plan to develop a similar formula in the context of \Cs bundles in a forthcoming paper \cite{noi}.

 \section{Sobolev modules}
 Let $X$ be a compact Riemannian manifold with boundary $Y:=\partial X$, let $\widetilde{X}$ the double manifold. Given a unital \Cs algebra $A$, we define bundles $\mathcal{A},\,\, \widetilde{\mathcal{A}}$ of finitely generated projective Hilbert $A$--modules over $X$ and $\widetilde{X}$ respectively.
 
For $k\in \mathbb{N}$ the Hilbert--Sobolev modules of sections are defined as a discrete chain of topological \Cs -Hilbert $A$ modules \cite{Va}, in particular there is no preferred Hilbertian product on the $\mathcal{H}^k$'s - except for $\mathcal{H}^0$ - but an admissible class of products such that the induced Banach topologies are the same and give rise to the same space of adjointable functionals. 
Admissible Hilbert products can be defined in coordinate patches for $X,\,\widetilde{X}$, using the notion of weak derivativatives ($L^2$--derivatives) or, for $\widetilde{X}$ using powers of the Laplacian \cite{Va}.
 Let $\mathbb{H}^n:=\{x \in \mathbb{R}^n:\, x_n\geq 0\}$ denote the half space and let $V$ be a Hilbert $A$-module. For real $s$ the Hilbert module $\mathcal{H}^s(\RR^n;V)$ can be defined by the Fourier transform 
 $$\mathcal{H}^s(\RR^n;V)=\{f \in \mathcal{H}^0(\RR^n;V):\, (1+|\xi|^2)^{s/2}\hat{f}(\xi)\in \mathcal{H}^0(\RR^n;V)\}$$ where
 $$\hat{f}(\xi):=\frac{1}{(2\pi)^{\frac{n}{2}}}\int e^{-i \xi \cdot x}f(x) dx.$$
 More generally, following Schwartz \cite{Schwartz}, we can define (tempered) distributions with values in $V$. These will be useful next.
  We have a continuous embedding with dense range $$J_{\RR^n}:\mathcal{H}^1(\RR^n;V)\longrightarrow \mathcal{H}^0(\RR^n;V).$$ 
The Hilbert--module adjoint is easily described in terms of the Fourier transform. If $h\in \mathcal{H}^0(\RR^n;V)$ then
 $$\widehat{J_{\RR^n}^*h}=\dfrac{\hat{h}(\xi)}{1+|\xi|^2}\, \textrm{ i.e. }J_{\RR^n}^*h=\Delta^{-1}h\,.$$ From this formula we see that $J^*_{\RR^n}$ is surjective on the Schwartz sections and has dense range.
 Since we are dealing only with $L^2$ and $H^1$ we can define a continuous extension operator by reflection $\ell_i:\mathcal{H}^i(\HH;V)\longrightarrow \mathcal{H}^i(\RR^n;V)$, $i=0,1$, having a more simplified expression than the classical one (\cite{Boos}):
 \begin{equation}\label{ext}
\ell_i f:=\begin{cases}
f(y,t) \quad t\geq 0
\\
-f(y,-t) \quad t<0
\end{cases}
\end{equation} 
It has an obvious section which is the restriction map. It is adjointable with adjoint map $\ell_i^*:g\longmapsto g(y,t)+g(y,-t)$.
We have a commutative diagram,
$$\xymatrix{\mathcal{H}^1(\RR^n;V)\ar[rr]^{J_{\RR^n}}& &\mathcal{H}^0(\RR^n;V)\ar[d]^{r_0}\\\mathcal{H}^1(\HH;V)\ar[rr]_{J_{\HH}}\ar[u]^{\ell_1}& &\mathcal{H}^0(\HH;V)}$$ then $r_0$ is the extension by zero and
$$J^*_{\HH}=\ell_1^*\circ J^*_{\RR^n}\circ r_0^*.$$ Now introduce in $\RR^n$ the operators $\Lambda_{\pm}:=\mp \partial_{x_n}+\sqrt{1+\Delta_{(n-1)}}$. They are isomorphisms from $\mathcal{H}^s$ to $\mathcal{H}^{s-1}$, $\Lambda_+^*=\Lambda_-$ and $\Lambda_+\Lambda_-=\Delta_{(n)}=\Delta$ the positive Laplace Beltrami operator. It is well known that for every $t$, the operator $ \Lambda_+^t$ preserves the space of distributions supported in the half space $\mathbb{H}^n_-$ \cite{Boos} and  $\Lambda_-$ preserves distributions supported in $\HH_+$. Now applying this together with the property $\star \Lambda_+ \star=\Lambda_+$ for $\star:f\longmapsto -f(-t)$ one sees that, when restricted to Schwartz sections, the map $\ell_1^*$ is simply the restriction and $J_{\RR}^*$ can be inverted. In other words $J_{\RR}^*$ has dense range.

In particular by the standard Friedrichs method \cite{Ta} $\mathcal{H}^1(\HH)$ sitting dense in $\mathcal{H}^0(\HH)$ is the domain of a positive selfadjont operator $D$ and the complex interpolation procedure can be carried on to define the intermediate Hilbert--Sobolev modules
$$\mathcal{H}^{\theta}(\HH;V):=[\mathcal{H}^0(\HH;V),\mathcal{H}^1(\HH;V)]_{\theta}=\mathcal{D}(D^{\theta}),\quad 0<\theta<1.$$
Negative order spaces are defined by duality as usual.
These results are transported on a Riemannian manifold with boundary using coordinates, local trivializations and a partition of unity. 
   \section{The Calderon projection}

 \subsection{Invertible double}
Following the recent work \cite{Xie} and the classical theory \cite{Boos, Lesch} we carry on the construction of the invertible double of the Dirac operator coupled with \Cs Hilbert module bundles. 
We do everything in even dimension just for notational simplicity. All the results of this section extend to odd dimensions in a trivial manner.

So let $X_1$ be an even dimensional compact Riemannian manifold with boundary $Y$. Let $\mathcal{E}_1$ be a graded Clifford bundle over $X_1$ and $\mathcal{A}$ a bundle of finitely generated projective Hilbert $A$-modules for a (real or complex) unital $C^*$ algebra $A$. Assume the metric and the Clifford structure are product near the boundary. For simplicity, we will also assume that the connection and metric on the twisting bundles to be in product form.

We denote by $X_2:=-X_1$ the manifold with opposite orientation and corresponding Clifford bundle $\mathcal{E}_2$. The bundles $\mathcal{E}_1\otimes \mathcal{A}$ and $\mathcal{E}_2\otimes \mathcal{A}$ are glued together by the Clifford multiplication $c(v)$ where $v:=d/du$ is the inward unit normal vector near the boundary of $X_1$. We decorate with tilde the resulting doubled bundle and manifold i.e. $\widetilde{X}:=X_1 \cup_{c(v)}X_2$ and
$$\widetilde{\mathcal{E}}\otimes \widetilde{\mathcal{A}}:=\big{(}\mathcal{E}_1^{\pm}\cup_{c(v)}\mathcal{E}_2^{\mp}\big{)}\otimes \mathcal{A}.$$
A section of $\widetilde{\mathcal{E}}^+\otimes \widetilde{\mathcal{A}}$ can be identified with a pair $(s_1,s_2)$ where $s_1\in \Gamma(\mathcal{E}_1^+\otimes \mathcal{A})$, $s_2\in \Gamma(\mathcal{E}_2^-\otimes \mathcal{A})$ such that near the boundary $$s_2=c(v)s_1.$$ 

We have two graded  Dirac operators 
$$D_i^{\pm}:\Gamma(X_i;\mathcal{E}_i^\pm \otimes \mathcal{A})\longrightarrow \Gamma(X_i;\mathcal{E}_i^\mp\otimes \mathcal{A})$$ and a resulting double operator
$\widetilde{D}$ on $\widetilde{X}$,
$$\widetilde{D}^{\pm}(s_1,s_2):=(D^{\pm}_1s_1,D_2^{\mp}s_2).$$ It is a bounded operator between the corresponding Hilbert--Sobolev modules
$$
\left(\begin{array}{cc}0 & \widetilde{D}^- \\\widetilde{D}^+ & 0\end{array}\right):\mathcal{H}^1(\widetilde{X};\widetilde{\mathcal{E}}^+\otimes \mathcal{A})\oplus \mathcal{H}^1(\widetilde{X};\widetilde{\mathcal{E}}^-\otimes \mathcal{A})\longrightarrow \mathcal{H}^0(\widetilde{X};\widetilde{\mathcal{E}}^+\otimes \mathcal{A})\oplus \mathcal{H}^0(\widetilde{X};\widetilde{\mathcal{E}}^-\otimes \mathcal{A})
$$
\begin{teo}
({\bf{Invertible double construction}}). 
The operator $$\widetilde{D}^+:\mathcal{H}^1(\widetilde{X};\mathcal{\widetilde{E}}^+\otimes \widetilde{\mathcal{A}})\longrightarrow \mathcal{H}^0(\widetilde{X};\widetilde{\mathcal{E}}^-\otimes \widetilde{\mathcal{A}})$$ is invertible with bounded inverse 
$$(\widetilde{D}^+)^{-1}:\mathcal{H}^0(\widetilde{X};\widetilde{\mathcal{E}}^-\otimes \widetilde{\mathcal{A}})\longrightarrow \mathcal{H}^1(\widetilde{X};\widetilde{\mathcal{E}}^+\otimes \widetilde{\mathcal{A}}).$$
\end{teo}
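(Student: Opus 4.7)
The plan is to exploit the fact that the Clifford--multiplication gluing turns $\widetilde{D}$ into a regular self--adjoint unbounded operator on the Hilbert module $\mathcal{H}^0(\widetilde{X};\widetilde{\mathcal{E}}\otimes\widetilde{\mathcal{A}})$ with initial domain $\mathcal{H}^1$, just as in the Mishchenko--Fomenko setup for Dirac operators on closed manifolds. From the block decomposition $\widetilde{D}=\left(\begin{smallmatrix}0 & \widetilde{D}^- \\ \widetilde{D}^+ & 0\end{smallmatrix}\right)$ and the absence of boundary terms when integrating by parts on the closed $\widetilde{X}$, one has $(\widetilde{D}^+)^{*}=\widetilde{D}^-$ as adjointable regular operators between the Sobolev modules. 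Invertibility of $\widetilde{D}^+$ therefore reduces to showing that it has closed range and that both component kernels $\ker\widetilde{D}^{\pm}$ are trivial; the bounded inverse is then automatic by the Banach open--mapping theorem for adjointable maps between Hilbert modules.

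First I would dispatch the routine analytic ingredients. Boundedness of $\widetilde{D}^+\colon\mathcal{H}^1\to\mathcal{H}^0$ is automatic since $\widetilde{D}$ is first order with smooth coefficients on a closed manifold. A local Garding estimate $\|\phi\|_{\mathcal{H}^1}\leq C(\|\widetilde{D}\phi\|_{\mathcal{H}^0}+\|\phi\|_{\mathcal{H}^0})$, obtained from the elliptic principal symbol in each coordinate patch and reassembled with a partition of unity, together with the $A$--compact Rellich embedding $\mathcal{H}^1\hookrightarrow\mathcal{H}^0$ derivable from the interpolation chain of the Sobolev section, yields that both $\widetilde{D}^{+}$ and $\widetilde{D}^{-}$ have closed range and are $A$--Fredholm.

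The critical step is the triviality of the kernel. Let $\phi\in\ker\widetilde{D}^+$ and identify it with a pair $(s_1,s_2)$ satisfying the matching $s_2|_Y=c(v)\,s_1|_Y$, so that $D_1^+s_1=0$ on $X_1$ and $D_2^-s_2=0$ on $X_2$. The Clifford gluing is precisely the device which promotes $\widetilde{D}^+$ to a smooth elliptic operator globally on $\widetilde{X}$, so interior elliptic regularity in the Hilbert--module setting makes $\phi$ smooth. I would then apply the Green formula \eqref{green} on $X_1$ against a test section obtained by reflecting $s_2$ across $Y$ and unwinding the Clifford identification: combining this with $D_1^+s_1=0$ and $D_2^-s_2=0$, and using that $c(v)$ is skew--adjoint and that $G$ anticommutes with $B$ in the product form \eqref{prodformbnd}, the boundary integral is forced to give $s_1|_Y=0$. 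Extending $s_1$ by zero across $Y$ then produces a distributional section of $\widetilde{\mathcal{E}}^+$ on $\widetilde{X}$ which still satisfies $\widetilde{D}^+$ in the distributional sense (no delta contribution at the interface because the trace vanishes), is smooth by interior elliptic regularity, and vanishes identically on the open set $X_2$; the Xie--Yu estimate $\|s\|\leq C_1\|s|_\Omega\|+C_2\|Ds\|$ then forces $s\equiv 0$. The symmetric argument gives $\ker\widetilde{D}^-=0$, and with $(\widetilde{D}^+)^{*}=\widetilde{D}^-$ this upgrades closed range to surjectivity.

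The hardest point is the production of an open set on which $\phi$ vanishes: the Xie--Yu estimate is inert without such a set, and extracting one from the bare equation $\widetilde{D}^+\phi=0$ requires careful bookkeeping of the orientation reversal on $X_2$, the product form near $Y$, and the precise way the Clifford matching interacts with the Green formula, so that the boundary integral collapses to a genuine vanishing of the trace $s_1|_Y$ rather than only an integral combination of it. Once that step is in place, everything else is standard Hilbert--module functional analysis.
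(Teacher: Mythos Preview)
Your argument is correct and follows the same architecture as the paper: establish closed range, then kill both $\ker\widetilde{D}^{\pm}$ via the Green formula to force the boundary trace to vanish, extend by zero across $Y$, invoke elliptic regularity, and finish with the unique continuation property (the Xie--Yu estimate). The only real difference is in the closed--range step: the paper quotes the Xie--Yu lower bound $\|\sigma\|\leq C\|\widetilde{D}\sigma\|$ on the double together with Roe's spectral transfer principle to isolate $0$ in $\mathrm{spec}\big((\widetilde{D}^+)^{*}\widetilde{D}^+\big)$, whereas you use the generic Mishchenko--Fomenko Fredholm package (G\aa rding estimate plus $A$--compact Rellich embedding). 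Your route is more self--contained and avoids importing the Xie--Yu bound as a black box; the paper's route is shorter but leans on an external result that itself encodes much of the invertibility. Either way the substantive step---extracting the vanishing trace from Green's formula and feeding it to UCP---is identical in both proofs.
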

\begin{proof}First of all the operator $\widetilde{D}^+$ is bounded below because the entire $\widetilde{D}$ is bounded below by Theorem 5.1 in \cite{Xie}: 
$$\|\sigma\|\leq C\|\widetilde{D}\sigma\|,\quad \sigma\in \Gamma(\widetilde{X};\widetilde{\mathcal{E}}\otimes \widetilde{\mathcal{A}}) .$$
Then zero is isolated in the spectrum of $(\widetilde{D}^+)^*\widetilde{D}^+$ by the John Roe convergence transfer principle (Proposition 1.13 in \cite{roe}). Indeed the original proof works word by word in the context of Hilbert modules. Then by Lemma 3.2 in the Appendix and by the Mishchenko Lemma we know that $\operatorname{Ran}(\widetilde{D}^+)$ is closed and orthocomplemented 
$$\mathcal{H}^0(\widetilde{X};\widetilde{E}^-\otimes \widetilde{\mathcal{A}})=\operatorname{Ker}\widetilde{D}^-\oplus^{\bot} \operatorname{Ran}\widetilde{D}^+.$$ It is sufficient to show that $\operatorname{Ker}\widetilde{D}^-$ is zero. This follow as in the classical situation by the unique continuation property. Indeed let $(s_1,s_2)$ such a solution. We are going to show that it vanishes on $Y$. Indeed $D^+s_1=0=D^-s_2$ hence
$$0=\langle D^+s_1,s_2\rangle_A-\langle s_1,D^-s_2\rangle_A=-\int_{X}\langle c(v)(s_1)_{|Y},(s_2)_{|Y}\rangle_A dy=\langle (s_2)_{|Y}, (s_2)_{|Y}\rangle_A.$$
It follows exactly by the classical argument (Lemma 9.2. in \cite{Boos}) that setting
\begin{equation}\label{uep}
\widetilde{s}:=\begin{cases}
s_1 \quad \operatorname{on }X_1
\\
0 \quad \operatorname{on }X_2
\end{cases}
\end{equation} gives a weak solution and by elliptic regularity a strong solution. This solution must be zero by the unique continuation property.
\end{proof} 
\begin{rem}The invertible double construction remains valid if the manifold is no more compact but the scalar curvature is positive and bounded by below outside a compact set \cite{Xie}.
\end{rem}
Of course the inverse $(\widetilde{D}^+)^{-1}$ is a order $-1$ pseudo differential operator in the Mishchenko--Fomenko calculus. To see that just take a parametrix $Q$,
$$Q\widetilde{D}^+=1+K$$ with $K$ a smoothing operator. Then
$$(\widetilde{D}^{+})^{-1}=Q-K(\widetilde{D}^{+})^{-1}.$$
\subsection{The Calderon projection}
Define the space of Cauchy datas along $Y$ by
$$H_{i}(\widetilde{D}^+):=\{u_{|Y}: u\in \Gamma
(\widetilde{\mathcal{E}}^+\otimes \mathcal{A})
,\quad \widetilde{D}^+u=0, \operatorname{ in } X_{i}\}, \quad i=1,2.$$
The closure of the Cauchy data spaces in $\mathcal{H}^{s-1/2}$ will be denoted by $H_{1,2}(\widetilde{D}^+,s).$
 It is easy to see from the Green formula that these spaces intersect only in the zero section. 
We denote, for $s>1/2$ by $\operatorname{Ker}_{1,2}(\widetilde{D}^+,s)$ the closure in $\mathcal{H}^{s-1/2}$ of the kernel of $\widetilde{D}^+$ in $X_{1,2}$.
By the unique continuation property there are no solutions in these spaces with support contained in the interior or identically vanishing on the boundary. Let $r_1$ be the operator of restriction of sections from $\widetilde{X}$ to $X_1$ and $\gamma_{t}$ the trace map which restricts a section to the slice which is distant $t$ from the boundary. It is continuous and adjointable from the Sobolev modules $\mathcal{H}^s$ to $\mathcal{H}^{s-1/2}$ for $s>1/2$. 
In particular we have the trace to the boundary
$\gamma_0^-:\mathcal{H}^s(\widetilde{X};\widetilde{\mathcal{E}}^-\otimes \widetilde{\mathcal{A}})\longrightarrow \mathcal{H}^{s-1/2}(Y;\widetilde{\mathcal{E}}^-\otimes \widetilde{\mathcal{A}}_{|Y}).$ 
It is known that $1/2$ is the critical regularity for the trace to the boundary indeed the space of smooth sections supported in the interior is dense in $\mathcal{H}^{1/2}$. Instead solutions of the Dirac operators of any regularity  have traces.
\begin{teo}\label{traccia}For every positive real $s$ (actually for every) the trace map $\gamma$ is well defined as a map  $$\gamma:\operatorname{Ker}_{1,2}(\widetilde{D}^+,s)\longrightarrow \mathcal{H}^{s-1/2}(Y;\widetilde{\mathcal{E}}\otimes \widetilde{\mathcal{A}}_{|Y}),$$ in other words
$$\gamma(f)=\lim_{t \rightarrow 0}\gamma_{t}(f).$$
\end{teo}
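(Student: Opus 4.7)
The proof would follow the classical strategy of Booss--Wojciechowski (Theorem~11.4 in \cite{Boos}) adapted to the Hilbert module setting, with the discrete spectral resolution of the tangential operator replaced by the continuous Borel functional calculus of a regular self-adjoint $A$-linear operator. Since the statement is local near $Y$, I would begin by localizing to the product collar $N = Y \times [0, \epsilon)$, in which $D^+ = G(\partial_u + B)$ with $G$ unitary and $B$ a self-adjoint first-order elliptic operator on $Y$ coupled to $\mathcal{A}$. Viewing $B$ as a regular self-adjoint $A$-linear operator on $\mathcal{H}^0(Y; \widetilde{\mathcal{E}}^+|_Y \otimes \mathcal{A})$ with domain $\mathcal{H}^1(Y; \cdot)$ supplies the continuous functional calculus required in the sequel.

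For a smooth section $f$ with $D^+ f = 0$ on a neighbourhood of $N$, the equation reduces in the collar to the ODE $\partial_u f = -Bf$, so $f(u, \cdot) = e^{-uB} \gamma_0 f$ in the functional calculus sense. Since $B$ is not semi-bounded, I would interpret $e^{-uB}$ either via spectral truncation $\chi_{[-R,R]}(B) e^{-uB}$ and limit, or equivalently by splitting $\gamma_0 f = \chi_{\geq 0}(B) \gamma_0 f + \chi_{<0}(B)\gamma_0 f$ and handling the two pieces separately with the contractive semigroups $e^{-u B_+}$ and $e^{uB_-}$ on their respective spectral subspaces. With this representation,
\[
\gamma_{t'} f - \gamma_t f \;=\; \bigl(e^{-t' B} - e^{-tB}\bigr)\gamma_0 f,
\]
and strong continuity of the Borel functional calculus in $u$ yields the Cauchy property, so that $\gamma_t f \to \gamma_0 f$ in $\mathcal{H}^{s-1/2}(Y)$ as $t \to 0$.

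The heart of the argument is the \emph{a priori} bound $\|\gamma_0 f\|_{\mathcal{H}^{s-1/2}(Y)} \leq C \|f\|_{\mathcal{H}^{s-1/2}(X)}$ for smooth kernel elements $f$. I would derive it by multiplying $f$ with a cut-off $\varphi(u) \in C_c^\infty([0, \epsilon/2))$ equal to $1$ near $u=0$, extending $\varphi f$ to $Y \times \mathbb{R}$ by the reflection of Section~3, and computing its Sobolev module norm via the Fourier transform in $u$ combined with the functional calculus of $B$. This yields a Plancherel-type identity expressing $\|\varphi f\|_{\mathcal{H}^{s-1/2}}$ as a spectral integral of $\gamma_0 f$ against a multiplier comparable to the symbol of $(1 + B^2)^{(s-1/2)/2}$, whence the desired bound.

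Once this estimate is in place, the theorem follows: smooth kernel sections are dense in $\operatorname{Ker}_{1,2}(\widetilde{D}^+, s)$ by definition, so the map $f \mapsto \gamma_0 f$ extends by continuity to the whole closure, and the pointwise formula $\gamma(f) = \lim_{t \to 0} \gamma_t(f)$ carries over. The main obstacle is the Plancherel step: in the Hilbert-module framework the spectrum of $B$ is generically continuous and the classical eigenbasis is unavailable, so both the identity and the comparability of the resulting spectral multiplier to the $\mathcal{H}^{s-1/2}$-norm must be reformulated purely in terms of the Borel functional calculus of $B$ and the $A$-valued inner product. Making rigorous sense of the formal semigroup $e^{-uB}$ for a non-semi-bounded regular operator, and checking its strong continuity in $u$ on the relevant Sobolev modules, are the other delicate points, but they can be handled by spectral cut-offs and the interpolation apparatus set up in Section~3.
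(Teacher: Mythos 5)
The paper's proof is deliberately terse: it observes that once the fractional Sobolev modules $\mathcal{H}^\theta(\HH;V)$ are defined by complex interpolation and the $\Lambda_\pm$ operators, the reflection extension and the half-space Fourier machinery of Section~3 are available, the Booss--Wojciechowski argument goes through \emph{verbatim}. That argument lives entirely in the ambient half-space pseudodifferential calculus -- it exploits the factorization through $\Lambda_\pm$ and the fact that these operators preserve distributions supported in a half-space, together with interpolation between integer orders -- and makes no use of the spectral decomposition of the tangential operator $B$. This is precisely why the paper could assemble all of Section~3 before stating the theorem, and why the ``word by word'' claim is plausible: nothing in that route depends on the spectrum of $B$ being discrete.

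You take a genuinely different route. You generalize the ``elementary harmonic analysis'' picture that the paper itself sketches in Section~2.1 (equation~(2.4)) for the classical UCP: expand the solution in the collar as $f(u,\cdot)=e^{-uB}\gamma_0 f$, replace the discrete eigenfunction expansion of $B$ by the Borel functional calculus of the regular self-adjoint $A$-linear operator $B$, and derive the \emph{a priori} trace bound by a Plancherel-type spectral identity. This is a legitimate alternative strategy -- it is the boundary-operator-centric proof rather than the ambient-parametrix proof -- and it parallels the APS semigroup picture. However, it is not a rewording of the paper's proof, and it carries real gaps that the $\Lambda_\pm$ route simply sidesteps. Concretely: (i) $e^{-uB}$ is not a bounded operator for non-semibounded $B$, so the formula $f(u,\cdot)=e^{-uB}\gamma_0 f$ needs to be justified either through a spectral split $\chi_{\geq 0}(B)\oplus\chi_{<0}(B)$ with separate control of the two halves (propagating the negative piece backward from $\gamma_\epsilon f$), or through a truncation $\chi_{[-R,R]}(B)$ with uniform-in-$R$ estimates; (ii) the Plancherel-type identity expressing $\|\varphi f\|_{\mathcal{H}^{s-1/2}}$ as a spectral integral must be formulated with the $A$-valued inner product, and the resulting multiplier must be compared to $(1+B^2)^{(s-1/2)/2}$ purely through the functional calculus rather than through eigenvalue counting; and (iii) strong continuity of the (spectrally cut-off) semigroup on $\mathcal{H}^{s-1/2}$ must be checked. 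You flag these yourself, which is to your credit, but they are not routine reformulations -- they would require new lemmas that the paper's chosen route does not. Also a small slip: your claimed bound $\|\gamma_0 f\|_{\mathcal{H}^{s-1/2}(Y)}\leq C\|f\|_{\mathcal{H}^{s-1/2}(X)}$ should read $\|f\|_{\mathcal{H}^{s}(X)}$ on the right, matching the $\operatorname{Ker}_{1,2}(\widetilde{D}^+,s)\subset\mathcal{H}^s(X_i)$ convention implicit in the mapping properties of $K_1$.
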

\begin{proof}
Thanks to the complex interpolation procedure above defined the classical proof \cite{Boos} repeats words by words.
\end{proof}
Now define the \emph{Poisson operator}
$$K_1:=r_1 (\widetilde{D}^+)^{-1}(\gamma_0^-)^*G:\Gamma(Y;\widetilde{\mathcal{E}}^+\otimes \widetilde{\mathcal{A}}_{|Y})\longrightarrow \Gamma(Y;\widetilde{\mathcal{E}}^+\otimes \widetilde{\mathcal{A}}_{|X_1\setminus Y}).$$
\begin{teo}
$$ $$
The Poisson operator $K_1$ extends to a continuous surjective map from $\mathcal{H}^{s-1/2}(Y;\widetilde{\mathcal{E}}^+\otimes \widetilde{\mathcal{A}}_{|Y})$ to the $s$-- kernel $\operatorname{Ker}_{1}(\widetilde{D}^+,s)$ and induces by restriction a bijection from the Cauchy data space to the kernel:
$$K_1:H_{1}(\widetilde{D}^+,s)\longrightarrow \operatorname{Ker}_{1}(\widetilde{D}^+,s).$$
\end{teo}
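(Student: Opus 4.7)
The plan is to adapt the classical Booss--Wojciechowski argument, verifying that the crucial jump formula for the Poisson representation survives in the Hilbert module category. For $\phi \in C^\infty(Y; \widetilde{\mathcal{E}}^+\otimes \widetilde{\mathcal{A}}_{|Y})$ set $v(\phi):=(\widetilde{D}^+)^{-1}((\gamma_0^-)^*G\phi)$ on $\widetilde{X}$ and $v_i:=r_i v(\phi)$. Since the source distribution $(\gamma_0^-)^*G\phi$ is supported on $Y$, each $v_i$ satisfies $\widetilde{D}^+v_i=0$ on the interior of $X_i$, so in particular $K_1\phi=v_1$ is a kernel section on $X_1$. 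The main computational point is the jump identity
$$\gamma_0(v_1)-\gamma_0(v_2)=\phi,$$
obtained by applying Green's formula \eqref{green} piecewise to $\langle \widetilde{D}^+ v(\phi),w\rangle_A$ against an arbitrary compactly supported smooth test section $w$ on $\widetilde{X}$ and rearranging the resulting boundary term $-\int_Y\langle c(v)\cdot,\cdot\rangle_A$. This identity is the engine driving everything else.

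For continuity, I would argue as follows. The trace map $\gamma_0^-$ is adjointable between the Hilbert--Sobolev modules built in Section 3, so $(\gamma_0^-)^*$ is adjointable in the opposite direction; combined with the fact that $(\widetilde{D}^+)^{-1}$ is a Mishchenko--Fomenko pseudodifferential operator of order $-1$, one obtains continuity of $K_1$ on negative order spaces. To pass to positive regularity, I would use interior ellipticity: since $\widetilde{D}^+K_1\phi=0$ on $X_1\setminus Y$, elliptic regularity in the Mishchenko--Fomenko calculus combined with the boundary trace Theorem \ref{traccia} upgrades regularity up to and including the boundary, producing the required continuous extension $\mathcal{H}^{s-1/2}(Y)\to \operatorname{Ker}_1(\widetilde{D}^+,s)$ for every $s$.

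Finally, for the bijection with $H_1$ and surjectivity onto $\operatorname{Ker}_1$. Given $u\in \operatorname{Ker}_1(\widetilde{D}^+,s)$ set $\phi:=\gamma(u)\in H_1(\widetilde{D}^+,s)$ and let $\widetilde{u}$ denote the extension of $u$ by zero to $\widetilde{X}$. A direct distributional computation exploiting the product form \eqref{prodformbnd} near the boundary together with Hilbert module integration by parts yields
$$\widetilde{D}^+\widetilde{u}=(\gamma_0^-)^*G\phi$$
in the sense of $A$--valued distributions; applying $(\widetilde{D}^+)^{-1}$ gives $\widetilde{u}=v(\phi)$, whence $u=v_1=K_1\phi$. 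In particular $\gamma\circ K_1=\operatorname{id}$ on $H_1$, which simultaneously yields surjectivity of $K_1$ onto $\operatorname{Ker}_1$ and injectivity on $H_1$, hence the desired bijection. The main technical obstacle I anticipate is justifying this distributional jump computation in the Hilbert $A$--module category: the extension-by-zero section is not a priori in $\mathcal{H}^1$, and one must check, using the theory of $A$--valued tempered distributions developed in Section 3, that its $\widetilde{D}^+$ interpreted distributionally picks up exactly the boundary contribution $(\gamma_0^-)^*G\phi$ and nothing more.
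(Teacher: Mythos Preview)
Your proposal is correct and follows essentially the same route as the paper: the paper's proof merely invokes Theorem~\ref{traccia} for continuity and defers the Cauchy data property to the classical computation in \cite{Boos}, which is exactly the Booss--Wojciechowski jump-formula argument you have spelled out. Your more detailed sketch (jump identity, extension by zero, distributional computation of $\widetilde{D}^+\widetilde{u}$) is precisely the content of that reference, and your flagged technical obstacle---justifying the distributional jump in the Hilbert $A$--module category---is handled by the $V$--valued tempered distribution framework set up in Section~3.
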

\begin{proof}
First of all, as a consequence of \ref{traccia}, $K_1$ maps $\mathcal{H}^s$ continuously to $\mathcal{H}^{s+1/2}$ for $s\geq 0$. The Cauchy data space property is a straightforward computation as in \cite{Boos}.
\end{proof}
Now the Calderon projection is finally defined as
$$\mathcal{C}_{+}:=\gamma^+ K_1.$$ It maps $\mathcal{H}^{s}(Y;\widetilde{\mathcal{E}}^+\otimes \widetilde{\mathcal{A}}_{|Y})$ continuously to itself for every $s$, $s\geq 0$. The classical proof adapts to show that when restricted to $\mathcal{H}^0$ is a projection (idempotent) on the space of Cauchy datas $H_1$ along $H_2$. Similar statements hold reversing the role of the left/right side of the doubled manifold. 
\begin{teo}The Calderon projection is a pseudo differential operator of zero order in the Mischenko--Fomenko calculus on the boundary. 
\end{teo}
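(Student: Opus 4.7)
The plan is to follow Seeley's strategy, transplanted into the Mishchenko--Fomenko calculus. First, I would reduce the problem to a collar neighbourhood $N=Y\times[0,\varepsilon)$ of the boundary. Away from $Y$ the Poisson operator $K_1=r_1(\widetilde{D}^+)^{-1}(\gamma_0^-)^*G$ applied to a distributional section concentrated on $Y$ lands in smooth sections, because $(\widetilde{D}^+)^{-1}$ is elliptic regularizing in the MF calculus; so the contribution to $\mathcal{C}_+$ coming from outside the collar is smoothing, and I only need to build a pseudodifferential representative of $\mathcal{C}_+$ modulo order $-\infty$ from the collar analysis.

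Inside the collar I use the product form $\widetilde{D}^+=G(\partial_u+B)$ with $B$ a selfadjoint elliptic first order MF--operator on $Y$. Writing the equation $(\partial_u+B)u=0$ in local tangential coordinates and taking the partial Fourier transform in $y$, the leading symbol gives the ODE $(\partial_u+b(y,\eta))\hat u=0$ at each cotangent covector, where $b(y,\eta)\in\End_A$ is the principal symbol of $B$, selfadjoint and invertible for $\eta\ne 0$. The candidate Poisson symbol is
\begin{equation*}
k_0(u,y,\eta)=e^{-u\,b_+(y,\eta)}\pi_+(y,\eta),
\end{equation*}
where $\pi_+(y,\eta)$ is the positive spectral projection of $b(y,\eta)$ and $b_+=b\pi_+$. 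Lower order terms are constructed recursively by the standard transport hierarchy, and Borel summation (available in the MF calculus because symbols are estimated in the operator norm of $\End_A$) produces a true Poisson--type parametrix $K_1'$ whose trace $\gamma^+K_1'$ is, by construction, a zero order MF--pseudodifferential operator on $Y$ with principal symbol $\pi_+$.

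It then remains to compare $K_1$ with $K_1'$. The difference $R:=K_1-K_1'$ satisfies $\widetilde{D}^+R f\in C^\infty(N)$ and has asymptotically vanishing Cauchy data, so by the invertibility of $\widetilde{D}^+$ (Invertible Double Theorem) together with elliptic regularity in the MF calculus, $R$ maps every Hilbert--Sobolev module $\mathcal{H}^s(Y)$ into smooth sections uniformly; in particular $\gamma^+R$ is a smoothing operator on $Y$. Therefore $\mathcal{C}_+=\gamma^+K_1'+\gamma^+R$ is a zero order MF--pseudodifferential operator.

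The main obstacle I expect is the construction and smoothness of the spectral projection $\pi_+(y,\eta)$ for the endomorphism--valued symbol $b(y,\eta)$ of a Hilbert $A$--module. Over $\CC$ this is elementary, but in the MF setting the spectrum of $b(y,\eta)$ need not be a finite set of real numbers, only a compact subset of $\RR$ not containing $0$ (for $\eta\neq 0$); one has to define $\pi_+(y,\eta)$ through a holomorphic contour integral $(2\pi i)^{-1}\oint(\lambda-b(y,\eta))^{-1}d\lambda$ around the positive part of the spectrum and verify that such a contour can be chosen locally uniformly in $(y,\eta)$, which is possible because ellipticity of $B$ forces the spectrum of $b(y,\eta)$ to stay a definite distance from $0$ on compact subsets of $T^*Y\setminus 0$. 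Once this functional calculus is set up, all the classical symbolic manipulations of \cite{Boos} transfer verbatim.
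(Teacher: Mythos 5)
Your strategy is genuinely different from the one in the paper, though both are classical. The paper follows Booß--Bavnbek--Wojciechowski: it takes the known fact that $(\widetilde{D}^+)^{-1}$ is an order $-1$ MF--pseudodifferential operator, localizes to $M_{\varphi_1}(\widetilde{D}^+)^{-1}M_{\varphi_2}$, expands its total symbol $\sum_k c_{-k}$, and for each homogeneous piece shows that $\lim_{t\to 0}\gamma_t\,\mathcal{C}_{-k}\gamma_0^*$ exists by replacing the $\tau$-integral over $\RR$ with a bounded contour $\Gamma(\eta)$ in the upper half plane where the holomorphic extension of $a_1(y,t;\eta,\tau)$ is invertible. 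The resulting boundary symbol is read off directly as $p_{-k}(y,\eta)=\frac{1}{2\pi}\int_{\Gamma(\eta)}c_{-k}(y,0;\eta,\tau)\,d\tau$. You instead construct a Poisson--type parametrix $K_1'$ by solving the transport hierarchy in the collar and then compare $K_1$ to $K_1'$. Both proofs crucially exploit the same structural fact: for a twisted Dirac operator the principal symbol is $c(\xi)\otimes 1_{\mathcal{A}}$, so the non--invertibility set of the complexified symbol (respectively, the spectrum of $b(y,\eta)$) is controlled by the scalar Clifford algebra and stays a definite distance from the real axis (respectively, from zero). This is exactly why the obstacle you flag at the end -- continuity and smoothness of the MF--valued spectral projection $\pi_+(y,\eta)$ -- is actually benign here: it factors as $q_+\otimes 1_{\mathcal{A}}$ with $q_+$ the classical finite--rank projection, and no genuine $C^*$--functional calculus is needed. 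Your worry is the right one for a general elliptic MF--operator, but it dissolves for Dirac.

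There is one real gap. The assertion that $R:=K_1-K_1'$ has ``asymptotically vanishing Cauchy data'' and is therefore smoothing is not justified as stated; you cannot invoke the vanishing of $\gamma^+R$ when proving that $\gamma^+R$ is smoothing is precisely the goal. What the Seeley argument actually requires is to extend $K_1'$ to the doubled manifold so that $\widetilde{D}^+\widetilde{K}_1'=(\gamma_0^-)^*G + S$ with $S$ smoothing \emph{on $\widetilde{X}$}; then $\widetilde{K}_1'-(\widetilde{D}^+)^{-1}(\gamma_0^-)^*G=(\widetilde{D}^+)^{-1}S$ is smoothing because $(\widetilde{D}^+)^{-1}$ is MF--pseudodifferential of order $-1$, and restricting to $X_1$ and taking boundary traces gives that $\gamma^+(K_1-K_1')$ is smoothing. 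The extension step and the composition of $(\widetilde{D}^+)^{-1}$ with a smoothing operator in the MF calculus need to be spelled out. Once that comparison lemma is supplied, your route is a valid alternative to the paper's contour--integral proof; what the paper's route buys is that it works entirely with the single interior parametrix and the limit $\gamma_t\to\gamma_0$, never having to construct or extend a boundary parametrix.
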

\begin{proof}The proof in \cite{Boos} works in the Mischenko--Fomenko framework. Due to its relevance we give a sketch. It is basically based on the main feature of the pseudodifferential calculus i.e the composition formula for symbols.
Localizing the problem the nontrivial step is the investigation of the limit
$$\lim_{t  \rightarrow 0}\gamma_{t}M_{\varphi_1}(\widetilde{D}^+)^{-1}M_{\varphi_2}\gamma_0^*,$$ where $M_{\varphi_1}$ are cutoff functions with non disjoint supports meeting the boundary and we have suppressed the suffix ${\pm}$ on the trace operators. The operator $M_{\varphi_1}(\widetilde{D}^+)^{-1}M_{\varphi_2}$ is pseudodifferential with total symbol admitting an asymptotic expansion 
$$\sum_{k=1}^{\infty}c_k,\quad c_{-1}=a_1^{-1}=\sigma_1(\widetilde{D}^+)^{-1}.$$ Then if the $\mathcal{C}_k=M_{\varphi_1}C_k(x,D)M_{\varphi_2}$ are the homogeneous pseudodifferential operators given by the symbols $c_k$ then we can write for every $k_0$
$$M_{\varphi_1}(\widetilde{D}^+)^{-1}M_{\varphi_2}=T+\underbrace{\sum_{k<k_0}\mathcal{C}_{-k}}_{\textrm{errors}}$$ with $T$ an operator of order $-K_0$ then only the investigations of the errors is needed. The point is a precise information contained in the symbol of $\mathcal{C}_{-k}\gamma_0^*$. Indeed for a test section $g$,
\begin{equation}\label{cald1}
\mathcal{C}_{-k}\gamma_0^*g(y,t)=\lim_{m\rightarrow \infty}(2\pi)^{-n}\int_{\RR^{n-1}}e^{iy\cdot \eta}\hat{g}(\eta)\bigg{[}\int_{-\infty}^{\infty}
e^{it\tau}\hat{\alpha}(\tau/m)c_{-k}(y,t;\eta,\tau)d\tau\bigg{]}d\eta
\end{equation} for a bump function $\alpha$ on $\RR$ supported in $(-1,-1/2)$ with $\int \alpha=1$. The decisive step is the replacement of the $\RR$ integral in \eqref{cald1} to an integral on a finite path $\Gamma(\eta)$ contained in $\Im(\tau)\geq 0$. This will provide uniform boundedness of the integrals permitting the passage of the limit inside the integral,
\begin{equation}\label{cald2}
\mathcal{C}_{-k}\gamma_0^*g(y,t)=(2\pi)^{-n}\int_{\RR^{n-1}}e^{iy\cdot \eta}\hat{g}(\eta)\bigg{[}\int_{\Gamma(\eta)}
e^{it\tau}c_{-k}(y,t;\eta,\tau)d\tau\bigg{]}d\eta.
\end{equation}
The choice of $\Gamma(\eta)$ 
depends on the invertible properties of the complex extension in $\tau$ of $a_1(y,t;\eta,\tau)$. In our case, the operator is a twisted Dirac operator and the principal symbol is the Clifford multiplication $a_1(\xi)= c(\xi)\otimes \operatorname{1_{\mathcal{A}}}$, so all the arguments in \cite{Boos} remain valid. More precisely for $\eta$ in the cosphere bundle there is a compact set $\mathcal{Z}\subset \mathbb{C}$ not intersecting the real axis such that $a_1$ is invertible for $\tau$ in the complement of $\mathcal{Z}$. Divide $\mathcal{Z}$ in the pieces $\mathcal{Z}^{\pm}$ with positive/negative imaginary part. 
The positive part is contained in a circle of radius $R$.
The contour is
$$\Gamma(\eta):=\partial\big{(}\{\tau:|\tau|\leq \max(1;R)\}\cap \{\Im \tau\geq 0\}\big{)}.$$

\begin{figure}[hpbt]
  \centering
  \includegraphics[width=4in]{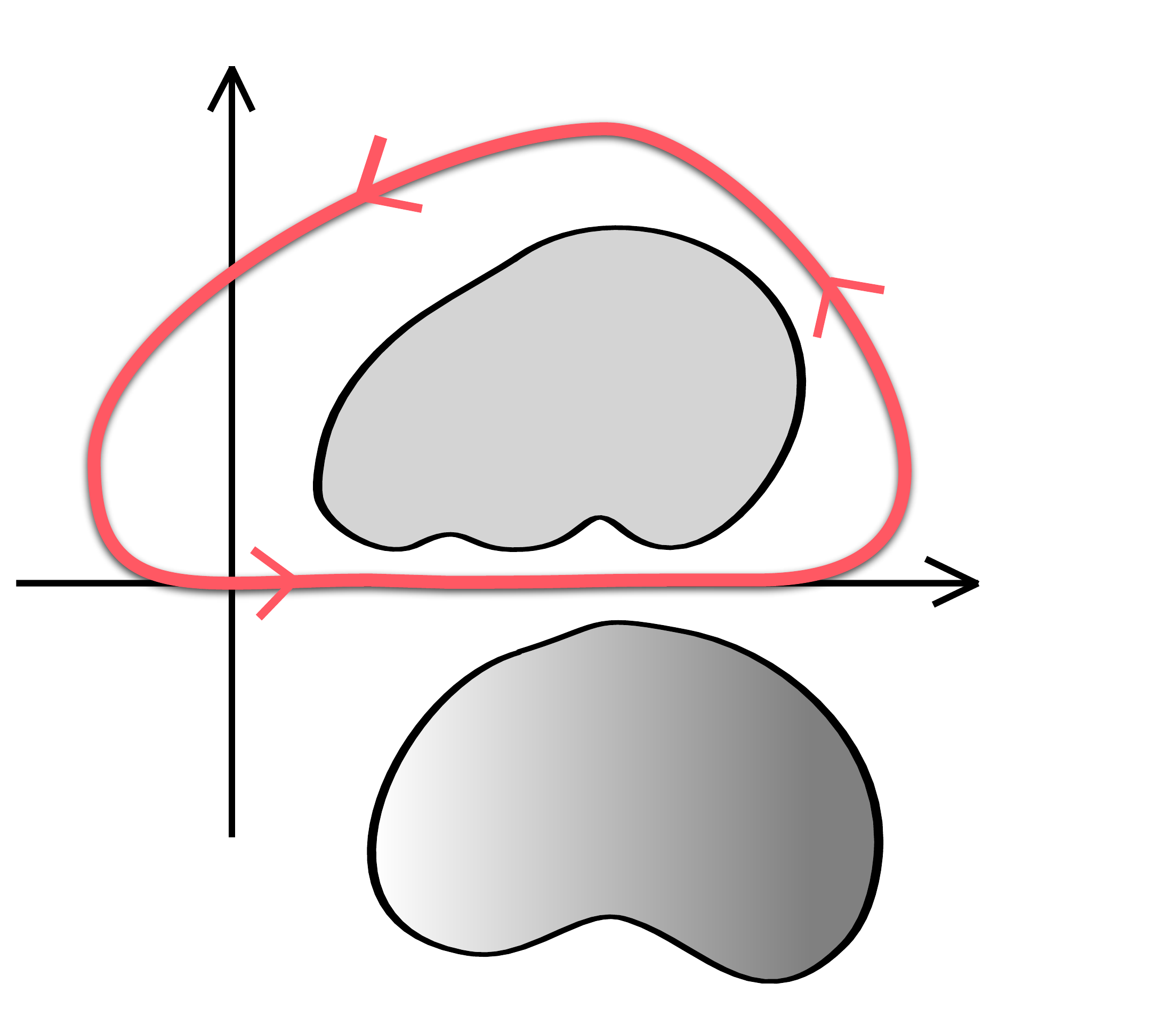}
\end{figure}

 Once the integral is changed into a contour integral one has dominated convergence with the integrands dominated by a common polynomial in $|\tau|$ then one can pass the limit inside the integral giving the uniform convergence of $\gamma_{t}\mathcal{C}_{-k}\gamma_0^*g$ in $L^2$. It also follows that the operator $\lim_{t \rightarrow 0}\gamma_{t}\mathcal{C}_{-k}\gamma_0^*$ is pseudodifferential of order $1-k$ with total symbol
$$p_{-k}(y,\eta):=1/2\pi \int_{\Gamma(\eta)}c_{-k}(y,0;\eta,\tau)d\tau$$ belonging to the homogenous standard symbol class.
\end{proof}
We can as well compute the principal symbol of the Calderon projection. Remember the product form near the boundary
$$\widetilde{D}^+=G(y)(\partial_u+B)$$ with selfadjoint boundary operator $B$  itself a twisted Dirac operator with twisted symbol $b(x,y)\otimes \mathbb{1}_{\mathcal{A}}:\widetilde{\mathcal{E}^+}_y\otimes \mathcal{A}_y\rightarrow \widetilde{\mathcal{E}^+}_y\otimes \mathcal{A}_y .$
Then
\begin{eqnarray}
\nonumber
\sigma_1(\mathcal{C}_+)(y,\eta)=\sigma_1(\lim \gamma_{t}\mathcal{C}_1\gamma_0^*G)(y,\eta)&=&1/2\pi\int_{\Gamma(\eta)}a_1^{-1}(0,y;\tau,\eta)d\tau \cdot G(y)\\ \nonumber&=&1/2\pi \int_{\Gamma(\eta)}\{G(y)(i\tau+b(x,y)\otimes \mathbb{1}_{\mathcal{A}})\}^{-1}\cdot G(y)d\tau\\ \nonumber
&=&-1/2\pi \int_{\Gamma(\eta)}\{
ib(x,y)\otimes \mathbb{1}_{\mathcal{A}}-\tau\}^{-1}dt\\
\nonumber
&=&q_+(x,y)\otimes \mathbb{1}_{\mathcal{A}}
,
\end{eqnarray}
where $q_+(x,y)$ is the spectral projection of $b(x,y)$ on the space associated to the eigenvalues with positive real part. 
Of course $\mathcal{C}_+$ is not self adjoint i.e. is not a orthogonal projection in the Hilbert module of $L^2$--sections on the boundary. Its range as a bounded operator in $L^2$ is closed and complementable being the range of a selfadjoint adjointable (since pseudofferential) idempotent. In the applications, especially dealing with spectral sections and global boundary value problems one can change the operator with its orthogonalized.
\begin{lem}The operator $F:=\mathcal{C}_+\mathcal{C}_+^*+(1-\mathcal{C}_+^*)(1-\mathcal{C}_+)$ acting on the Hilbert module of $L^2$--sections on the boundary is invertible. The operator $$\mathcal{C}_+^\bot:=\mathcal{C}_+\mathcal{C}_+^*F^{-1}$$ is the orthogonal projection on the image of $\mathcal{C}_+$. It is a pseudodifferential operator of order zero with the same principal symbol as $\mathcal{C}_+$.
\end{lem}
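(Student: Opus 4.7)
The plan is to adapt the standard Kato-style orthogonalization of a non-self-adjoint idempotent. Writing $C := \mathcal{C}_+$ for brevity and expanding, one immediately obtains the identity
\[
F = I + (C-C^*)^*(C-C^*),
\]
by using $C^2 = C$ and $(C^*)^2 = C^*$. This exhibits $F$ as $I$ plus a positive adjointable operator on the $L^2$ Hilbert $A$-module of boundary sections, so $F \geq I$ in the ordering of adjointable operators and $F$ is invertible with $\|F^{-1}\| \leq 1$.

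The algebraic backbone consists of the commutation relations
\[
CF = FC = CC^*C, \qquad C^*F = FC^* = C^*CC^*,
\]
which follow from $C^2 = C$ by a one-line computation and its adjoint. Consequently both $C$ and $C^*$ commute with $F^{-1}$. Self-adjointness of $\mathcal{C}_+^\bot = CC^*F^{-1}$ is then immediate; idempotence reduces to the identity $CC^*CC^* = CC^*F$, which is a consequence of $C^*CC^* = C^*F$. The inclusion $\operatorname{Ran}(\mathcal{C}_+^\bot) \subseteq \operatorname{Ran}(C)$ is manifest from the factorization $\mathcal{C}_+^\bot = C\cdot(C^*F^{-1})$, and conversely for $v = Cw$ the computation $\mathcal{C}_+^\bot v = CC^*CF^{-1}w = CF\cdot F^{-1}w = Cw$ shows that $\mathcal{C}_+^\bot$ is the identity on $\operatorname{Ran}(C)$.

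For the pseudodifferential and symbolic part, the crucial input is that the tangential operator $B$ is selfadjoint, so the fibrewise spectral projection $q_+(y,\eta)$ computed in the previous theorem is orthogonal and satisfies $q_+^* = q_+$. Thus $\sigma_0(\mathcal{C}_+) = \sigma_0(\mathcal{C}_+^*) = q_+ \otimes \mathbb{1}_\mathcal{A}$, so $C - C^*$ has vanishing principal symbol and hence $F = I + R$ with $R$ of order $\leq -2$ in the Mishchenko--Fomenko calculus. A standard parametrix construction, inverting an elliptic operator whose full symbol differs from $I$ by terms of negative order, then shows that $F^{-1}$ belongs to the calculus and has the form $I + R'$ with $R'$ again of order $\leq -2$. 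Composing principal symbols yields $\sigma_0(\mathcal{C}_+^\bot) = q_+\cdot q_+\cdot 1 = q_+\otimes \mathbb{1}_\mathcal{A}$, matching that of $\mathcal{C}_+$.

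The step I expect to require the most care is exactly this last one: proving that $F^{-1}$ lies in the Mishchenko--Fomenko calculus, rather than only in the \Cs algebra of bounded adjointable operators on the $L^2$ module. The algebraic identities transfer verbatim from the scalar case, but the pseudodifferential regularity of $F^{-1}$ needs a genuine parametrix argument using that $F - I$ is of negative order, together with the fact that $F$ is bounded below so that the Neumann series for the remainder converges in the ideal of smoothing operators.
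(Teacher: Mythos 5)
Your proof is correct, and it takes a somewhat different route than the paper. The paper's argument is geometric: it uses the already-established orthogonal splitting $\mathcal{H}=\operatorname{Ran}(\mathcal{C}_+)\oplus \operatorname{Ker}(\mathcal{C}_+^*)$ and observes that $F$ is block-diagonal with respect to it (with blocks $1+aa^*$ and $1+a^*a$ if $a:\operatorname{Ker}(\mathcal{C}_+^*)\to\operatorname{Ran}(\mathcal{C}_+)$ is the off-diagonal piece of $\mathcal{C}_+$), whence invertibility and the projection property are read off block by block; the paper's labelling of $\mathcal{C}_+\mathcal{C}_+^*$ as a ``partial isometry'' is loose but the intent is clear. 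Your route is purely algebraic: the identity $F=I+(\mathcal{C}_+-\mathcal{C}_+^*)^*(\mathcal{C}_+-\mathcal{C}_+^*)$ gives $F\ge I$ directly, and the commutation relations $\mathcal{C}_+F=F\mathcal{C}_+=\mathcal{C}_+\mathcal{C}_+^*\mathcal{C}_+$ carry the rest. This has the merit of not presupposing orthocomplementability of $\operatorname{Ran}(\mathcal{C}_+)$ at all — your construction exhibits the complementing orthogonal projection directly from the idempotence of $\mathcal{C}_+$, which is arguably cleaner in the Hilbert-module setting where complementability is a genuine issue. Your treatment of the symbolic part is also more explicit than the paper's ``the rest of the proof is standard,'' correctly isolating the key fact that $q_+^*=q_+$ forces $\sigma_0(\mathcal{C}_+-\mathcal{C}_+^*)=0$.

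One small imprecision to tighten: the closing remark about ``the Neumann series for the remainder converges in the ideal of smoothing operators'' is not quite how the parametrix argument should be phrased, since the Neumann series $\sum(-R)^k$ for $R=F-I$ need not converge in operator norm ($\|R\|$ is not controlled to be $<1$). The correct and standard route is the two-sided parametrix identity: with $QF=I-K_1$ and $FQ=I-K_2$ ($K_i$ smoothing, $Q$ of order zero), one writes $F^{-1}=Q+K_1Q+K_1F^{-1}K_2$, and $K_1F^{-1}K_2$ is smoothing because $K_2$ maps into $\mathcal{H}^\infty$, $F^{-1}$ is $L^2$-bounded, and $K_1$ maps $L^2$ into $\mathcal{H}^\infty$. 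This uses invertibility of $F$ (your step one) but not any norm smallness, and it places $F^{-1}$ in the Mishchenko--Fomenko calculus with $\sigma_0(F^{-1})=\sigma_0(Q)=I$, completing the symbol computation exactly as you state.
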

\begin{proof}
Since the range of the Calderon projection is complementable we have the splitting
$$\mathcal{H}=\operatorname{range}(\mathcal{C}_+)\oplus \operatorname{range}(\mathcal{C}_+)^{\bot}=\operatorname{range}(\mathcal{C}_+)\oplus \operatorname{Ker}(\mathcal{C}_+)^{*}$$
The operator $F$ can be written as
$$F=\underbrace{\mathcal{C}_+\mathcal{C}_+^*}_{G}+\underbrace{(1-\mathcal{C}_+^*)(1-\mathcal{C}_+)}_T$$ where $G$ is the partial isometry $\operatorname{range}(\mathcal{C}_+)\longrightarrow \operatorname{range}(\mathcal{C}_+^*)$ and $T$ is the partial isometry on the complements.
The rest of the proof is standard.
\end{proof}

\appendix

 \section{Hilbert Modules}
 We recall here some results on the theory of operators on Hilbert $C^*$--modules. For more details and proofs the reader is referred to \cite{lance, Skand, Wo}
\subsection{Basic Definitions} 
Let $A$ be a $C^*$ algebra. We denote by $P^+(A)$ the set of all positive elements in the $C^*$ algebra, i.e. those $a \in A$ satisfying one of the following equivalent conditions:
\begin{itemize}
\item $a$ has positive spectrum, i.e. $\sigma(a) \subset [0, +\infty)$;
\item $a=bb^*$ for some $b \in A$;
\item $a=h^2$ for some Hermitean $h\in A$.
\end{itemize}

We denote by $\mathcal{P}(A)$ the category of \emph{finitely generated projective modules} over $A$.
\begin{ddef}
A topological $A$-module $M$ is called a \emph{Hilbert $A$-modules} if it is equipped with a continuous map 
\[\begin{aligned}
M\times M \rightarrow A
(x,y) \mapsto \langle x , y \rangle_A
\end{aligned}\]
satisfying the conditions
\begin{enumerate}
\item $\langle x, x \rangle_A \geq 0$ for any $x \in M$, i.e. $\langle x, x \rangle_A \in P^+(A)$;
\item $\langle x, x \rangle_A =0$ if and only if $x=0$;
\item $\langle x, y \rangle_A = \langle y, x \rangle_A^*$ for all $x, y \in M$;
\item $\langle x, ya \rangle_A =\langle x, y \rangle_A$ for all $x, y \in M$ and $a \in A$.
\end{enumerate}
and the module $M$ is a Banach space with respect to the norm induced by the inner product $\| x \|^2_A :=\| \langle x, x \rangle_A \|_A$, where $\| \cdot \|_A$ denotes the $C^*$-norm on $A$. 
\end{ddef} 
The map $\langle \cdot, \cdot \rangle_A$ is defined to be a Hermitean $A$-inner product. When no confusion arises we will omit the subscript. 

We will say that two elements $x$ and $y$ in $M$ are orthogonal if $\langle x, y \rangle =0 $, and we will write $x \perp y$. Orthogonality for arbitrary sets may be defined similarly.

If $N, L$ are two closed submodules in $M$, and $N\oplus L = M$, $N \perp M$, then $N$ is called the $A$-orthogonal complement of $L$ in $M$ (and viceversa).

Any free $A$-module $A^n$ and any projective module $P \in \mathcal{P}(A)$ can be equipped with a structure of Hilbert $A$-module. 

Let $P \in \mathcal{P}(A)$. We consider the set $\ell_2(P)$ of infinite sequences
\[(x_1, \dots, x_i, \dots) = x \quad x_i \in P, \quad i=1, \dots\]
such that the series $\sum_{i} \langle x_i , x_i \rangle_A$ converges in the algebra $A$. For any two elements $x, y \in P$ we define 
\[\langle x, y \rangle = \sum_{i=0}^{\infty} \langle x_i, y_i \rangle_A .\]
The space $\ell^2(P)$ is a Hilbert $A$-module with respect to this inner product.
\begin{lem}
Any free $A$-module admits a unique (up to isomorphism) $A$-inner product.
\end{lem}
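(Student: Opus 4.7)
The plan is to handle a finitely generated free module $A^n$ by separating existence and uniqueness.

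\emph{Existence.} The standard prescription
\begin{equation*}
\langle (a_i)_{i=1}^n, (b_i)_{i=1}^n\rangle_A := \sum_{i=1}^n a_i^* b_i
\end{equation*}
verifies the four axioms directly from the $C^*$-identities in $A$, and the induced norm makes $A^n$ complete as a finite product of copies of $A$.

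\emph{Uniqueness.} Let $\langle\cdot,\cdot\rangle'$ be another admissible $A$-inner product on $A^n$ and $\{e_i\}_{i=1}^n$ the standard basis. I would encode the new structure in the Gram matrix
\begin{equation*}
G=(g_{ij})\in M_n(A),\qquad g_{ij}:=\langle e_i,e_j\rangle'.
\end{equation*}
Axiom (3) gives $G=G^*$ in $M_n(A)$, and expanding $x=\sum_i e_i a_i$ yields $\langle x,x\rangle'=x^*Gx$, so axiom (1) forces $x^*Gx\geq 0$ for every $x\in A^n$. Viewing $G$ as the adjointable operator of left matrix multiplication on the standard Hilbert module $A^n$, the standard characterisation of positive operators on a Hilbert module (see \cite{lance}) yields $G\geq 0$ in the $C^*$-algebra $M_n(A)=\mathcal{L}(A^n)$.

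The central step is to upgrade positivity to invertibility of $G$. From $\|x\|'^2=\|x^*Gx\|\leq \|G\|\,\|x\|^2$, the identity map $(A^n,\|\cdot\|)\to(A^n,\|\cdot\|')$ is continuous; since both norms make $A^n$ a Banach space, the open mapping theorem supplies a constant $C$ with $\|x\|\leq C\|x\|'$. Combining with Cauchy--Schwarz in the standard structure,
\begin{equation*}
\|x\|^2\leq C^2\|x^*Gx\|\leq C^2\|x\|\,\|Gx\|,
\end{equation*}
hence $\|x\|\leq C^2\|Gx\|$. A bounded-below self-adjoint element of a $C^*$-algebra is invertible, so $G$ is invertible in $M_n(A)$.

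With $G$ positive and invertible, set $S:=G^{1/2}\in M_n(A)$ (self-adjoint and invertible) and define $\Phi:A^n\to A^n$ by $\Phi(x):=S^{-1}x$. A direct computation
\begin{equation*}
\langle\Phi x,\Phi y\rangle' = (S^{-1}x)^*G(S^{-1}y) = x^*(S^{-1})^*S^2S^{-1}y = x^*y = \langle x,y\rangle_A
\end{equation*}
shows that $\Phi$ is an $A$-module isomorphism intertwining the standard inner product with the given one.

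I expect the principal obstacle to be the invertibility of $G$: mere positivity is not enough, as e.g.\ a positive function on $[0,1]$ vanishing at one point still defines a non-degenerate $A$-valued pairing on $A^1=C([0,1])$ but destroys Banach completeness. The completeness axiom has to be invoked, and the open mapping theorem combined with Cauchy--Schwarz packages this cleanly into a lower bound on $G$.
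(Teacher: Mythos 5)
The paper states this lemma without proof; the appendix merely recalls it and refers to the references on Hilbert $C^*$-modules. Your argument is the standard one and is essentially correct: encode the second inner product in the Gram matrix $G=(\langle e_i,e_j\rangle')\in M_n(A)$, use the Lance characterisation of positivity in $\mathcal{L}(A^n)\cong M_n(A)$ to get $G\geq 0$, prove invertibility, and then conjugate by $G^{-1/2}$ to exhibit the isomorphism. Your route to the decisive invertibility step is nice: the two Hilbert-module norms are both Banach, the identity is norm-continuous in one direction by Cauchy--Schwarz, so the open mapping theorem gives the reverse bound, and combining with Cauchy--Schwarz again yields $\|x\|\leq C^2\|Gx\|$, i.e.\ $G$ is bounded below on $A^n$.

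Two minor points worth sharpening. First, the final appeal to "a bounded-below self-adjoint element of a $C^*$-algebra is invertible" should be stated for what you actually have: $G\geq 0$ is bounded below as an adjointable operator on the Hilbert module $A^n$. For a positive $G$ this does imply $0\notin\sigma(G)$ — if $0\in\sigma(G)$, functional calculus produces $p=f(G)$ with $\|p\|=1$ and $\|Gp\|$ arbitrarily small, and picking $x_0$ with $\|px_0\|\geq 1/2$ contradicts the lower bound; alternatively, bounded below gives closed range and trivial kernel, and the Mishchenko lemma proved later in the same appendix gives surjectivity. Either way the conclusion is correct; it is only the phrasing that conflates "bounded below as an element of $M_n(A)$" with "bounded below as an operator on $A^n$." Second, you tacitly restrict to finitely generated free modules $A^n$; since the algebraic free module on infinitely many generators is not complete, this is the only sensible reading of "free $A$-module" here, but it is worth saying so explicitly.
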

\begin{teo}
Suppose that $\ell_2(A)= M \oplus N$, where $M, N$ are closed submodules of $\ell_2(A)$ and $N$ has a finite number of generators. Then $N$ is a projective module.
\end{teo}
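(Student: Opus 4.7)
The strategy is to exhibit $N$ as the image of a projection in some matrix algebra $M_n(A)$, which by the preceding definitions is exactly what it means for $N$ to be an object of $\mathcal{P}(A)$. Choose a finite generating set $x_1,\dots,x_n\in N$ and assemble it into the surjective adjointable $A$-linear map
\[ \phi\colon A^n\longrightarrow N,\qquad \phi(a_1,\dots,a_n)=\sum_{i=1}^n x_i a_i. \]
If I can produce an adjointable section $\psi\colon N\to A^n$ of $\phi$, then $e:=\psi\phi\in\mathrm{End}_A(A^n)\cong M_n(A)$ is an idempotent with image isomorphic to $N$; the standard functional-calculus argument in unital $C^*$-algebras then replaces $e$ by a self-adjoint projection $p\in M_n(A)$ with $pA^n\cong N$, placing $N$ in $\mathcal{P}(A)$.

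To construct $\psi$, I would use the decomposition $\ell_2(A)=M\oplus N$ (interpreted, in accordance with the notation set up in the previous section, as an orthogonal direct sum) so that $N$ inherits from $\ell_2(A)$ an adjointable orthogonal projection and a compatible Hilbert module inner product. The Hilbert module adjoint of $\phi$ is then $\phi^*(v)=\bigl(\langle x_i,v\rangle\bigr)_{i=1}^n$, and the composition
\[ T:=\phi\phi^*\colon N\longrightarrow N,\qquad Tv=\sum_{i=1}^n x_i\langle x_i,v\rangle, \]
is a self-adjoint positive adjointable endomorphism of $N$. The crucial step is to verify that $T$ is invertible in $\mathrm{End}_A(N)$; once this is known, setting $\psi:=\phi^*T^{-1}\colon N\to A^n$ yields the desired adjointable section, since $\phi\psi=TT^{-1}=\mathrm{id}_N$.

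The invertibility of $T$ is the Hilbert-module analogue of the elementary Hilbert-space fact that a surjective bounded adjointable operator $\phi$ gives rise to an invertible $\phi\phi^*$. Here, $\phi$ is adjointable with closed range (the range being exactly $N$, which is closed by hypothesis), so the closed-range theorem for adjointable operators on Hilbert $C^*$-modules (cf.\ Lance's book) applies: it yields the orthogonal decomposition $N=\ker T\oplus\mathrm{Ran}\,T$ together with the identification $\ker T=\ker\phi^*$. But $\ker\phi^*=\{v\in N:\langle x_i,v\rangle=0\ \forall i\}$, which vanishes because the $x_i$ generate $N$, so any $v\in N$ orthogonal to all of them is orthogonal to the whole of $N$ and hence zero. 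Consequently $T$ is bijective with closed range, therefore invertible with bounded adjointable inverse. The principal obstacle is precisely this closed-range/orthogonal-decomposition theorem for adjointable operators between Hilbert $C^*$-modules, which has no purely algebraic counterpart; with it at hand, the production of $\psi$, the idempotent $e$, and its self-adjoint replacement $p$ are straightforward.
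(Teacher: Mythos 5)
The paper states this result in its appendix without proof, citing \cite{lance, Skand, Wo}, so there is no in-paper argument to compare against; your proposal must therefore stand on its own, and it does. The strategy is the standard one: pick a finite generating set of $N$, form the surjective adjointable map $\phi\colon A^n\to N$, produce an adjointable section $\psi=\phi^*(\phi\phi^*)^{-1}$, and read off the idempotent $e=\psi\phi\in \End_A(A^n)$ whose image is isomorphic to $N$. Two simplifications are worth noting. First, your detour through the closed-range theorem and the computation $\ker(\phi\phi^*)=\ker\phi^*$ is redundant: the paper's own lemma on operators in Hilbert modules already records, in part (1), that surjectivity of an adjointable $\phi$ forces $\phi\phi^*$ to be invertible, which is exactly what you need and is the fact you are implicitly re-deriving. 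Second, upgrading $e$ to a self-adjoint projection $p$ by functional calculus is optional for the stated conclusion, since any idempotent $e\in M_n(A)$ already splits $A^n$ into a direct sum $eA^n\oplus(1-e)A^n$, making $eA^n\cong N$ a direct summand of a free module and hence finitely generated projective. Finally, the most economical route using the paper's toolkit bypasses $e$ entirely: since $\phi$ is surjective it has closed range, so Mishchenko's lemma (also in the appendix) gives the orthogonal decomposition $A^n=\ker\phi\oplus\operatorname{Ran}\phi^*$, and $\phi$ restricts to an isomorphism from $\operatorname{Ran}\phi^*$ onto $N$, exhibiting $N$ directly as an orthogonal summand of $A^n$.
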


\subsection{Operators in Hilbert Modules}
Let $M, N$ be Hilbert modules. 
\begin{ddef}
A mapping $T: M \rightarrow N$ such that for some mapping $T^*: N \rightarrow M$ the relation
\begin{equation}
\langle u, Tv \rangle = \langle T^* u, v \rangle \qquad \mbox{holds for all} \ u \in N, v \in M,
\end{equation}
is called an operator from $M$to $N$. We denote the space of such operatos by $\mathcal{L}(M,N)$.
\end{ddef} Note that for mappings between $C^*$-modules, the conditions of linerarity and boundedness do not always imply the existence of an adjoint.

The subspace $\mathcal{K}(M,N) \subset \mathcal{L}(M,N)$ of compact operators is determined as the norm closure of the space generated by operators of rank 1, i.e. operators of the form \[\theta_{x,y} z:= x \langle y, z \rangle \qquad x \in M_2, \quad y, z \in M_1.\]
Obviously, \[(\theta_{x,y})^* = \theta_{y,x} \qquad \theta_{x,y}\theta_{u,v} = \theta_{x \langle y, u \rangle, v}=\theta_{x, v \langle u, y \rangle}
\]  
For $M=N$, teh space $\mathcal{K}(M) := \mathcal{K}(M,M)$ is a $C^*$-ideal in $\mathcal{L}(M)$.
\begin{lem}
 Let $T\in \mathcal{L}(M,N)$ 
 \begin{enumerate}
 \item if $T$ is surjective $TT^*$ is invertible in $\mathcal{L}(M)$ and $M=\operatorname{Ker}T\oplus \operatorname{Ran}T^*.$
 \item If $T$ is bijective then so it is $T^*$, $T^{-1}\in \mathcal{L}(N,M)$ and $(T^{-1})^*=(T^*)^{-1}.$
 \end{enumerate}
 \end{lem}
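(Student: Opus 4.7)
The plan is to handle the two parts of the lemma separately, relying throughout on the generalised Cauchy--Schwarz inequality $\|\langle x,y\rangle\|\le \|x\|\,\|y\|$ in Hilbert $C^*$-modules, the Banach-space open mapping theorem (applicable because every Hilbert $A$-module is by definition a Banach space), and the $C^*$-functional calculus for selfadjoint elements of $\mathcal{L}(N)$. The small typo in the statement (``invertible in $\mathcal{L}(M)$'') should be read as invertibility of $TT^*\in \mathcal{L}(N)$.

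For part (1), I would first turn surjectivity of $T$ into a coercivity estimate for $T^*$. Since $T:M\to N$ is bounded and onto, the open mapping theorem provides a constant $C>0$ such that every $y\in N$ admits a preimage $x\in M$ with $Tx=y$ and $\|x\|\le C\|y\|$. Cauchy--Schwarz then yields
\begin{equation*}
\|y\|^2=\|\langle y,y\rangle\|=\|\langle Tx,y\rangle\|=\|\langle x,T^*y\rangle\|\le \|x\|\,\|T^*y\|\le C\|y\|\,\|T^*y\|,
\end{equation*}
so $\|T^*y\|\ge C^{-1}\|y\|$. In particular $\|\langle TT^*y,y\rangle\|=\|T^*y\|^2\ge C^{-2}\|y\|^2$, and since $TT^*$ is positive and selfadjoint this forces $TT^*\ge C^{-2}\mathbf{1}$ in $\mathcal{L}(N)$; the continuous functional calculus then supplies an inverse $(TT^*)^{-1}\in\mathcal{L}(N)$. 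For the orthogonal splitting of $M$, given $x\in M$ I set $y:=(TT^*)^{-1}Tx\in N$, so that $T^*y\in \operatorname{Ran}T^*$ and $T(x-T^*y)=0$; thus $x=(x-T^*y)+T^*y$ with the first summand in $\operatorname{Ker}T$, while orthogonality is immediate from $\langle k,T^*y\rangle=\langle Tk,y\rangle=0$ whenever $k\in\operatorname{Ker}T$. (As a by-product $\operatorname{Ran}T^*$ is closed, recovering the Mishchenko-type closed range statement.)

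For part (2), bijectivity of $T$ in particular means surjectivity, so part (1) applies: $T^*$ is bounded below and hence injective with closed range $\operatorname{Ran}T^*$, and the splitting $M=\operatorname{Ker}T\oplus \operatorname{Ran}T^*$ collapses (since $T$ is also injective) to $M=\operatorname{Ran}T^*$, so $T^*$ is bijective. The set-theoretic inverse $T^{-1}:N\to M$ is bounded by the open mapping theorem; to check adjointability, I put $S:=(T^*)^{-1}\in\mathcal{L}(M,N)$ (bounded by the same reasoning applied to the now-bijective $T^*$) and verify $S=(T^{-1})^*$: for any $z\in M$ and $y\in N$, writing $y=Tu$ with $u=T^{-1}y$,
\begin{equation*}
\langle z,T^{-1}y\rangle_M=\langle z,u\rangle_M=\langle T^*Sz,u\rangle_M=\langle Sz,Tu\rangle_N=\langle Sz,y\rangle_N,
\end{equation*}
which proves both that $T^{-1}$ is adjointable and that $(T^{-1})^*=(T^*)^{-1}$.

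The only genuinely delicate point is the passage from the Banach-space inverse bound to a bona fide operator inequality $TT^*\ge C^{-2}\mathbf{1}$ in the $C^*$-module sense, because strict positivity in Hilbert modules is controlled by the scalar quantity $\|\langle TT^*y,y\rangle\|$ rather than by a Hilbert-space inner product. Once this is in place, the equivalence between strict positivity of a selfadjoint element of $\mathcal{L}(N)$ and invertibility is a standard consequence of the continuous functional calculus, and the remainder of the argument is routine Hilbert-module bookkeeping.
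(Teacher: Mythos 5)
Your proof is correct. Note that the paper does not actually prove this lemma --- it is stated as recalled background with references to \cite{lance, Skand, Wo} --- but your argument is exactly the one the paper spells out for the Mishchenko lemma which follows: use the open mapping theorem to produce a bounded right inverse, apply Cauchy--Schwarz to get the coercivity estimate $\|T^*y\|\geq C^{-1}\|y\|$, and then pass to invertibility of $TT^*$.

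The one step worth tightening is the passage you yourself flag: you assert that $\|\langle TT^*y,y\rangle\|\geq C^{-2}\|y\|^2$ together with positivity of $TT^*$ forces the \emph{operator} inequality $TT^*\geq C^{-2}\mathbf{1}$. This is true, but it is not a one-liner --- it needs a continuous functional calculus argument (a bump function near the bottom of the spectrum, or equivalently the identity $\min\sigma(S)=\inf_{y\neq 0}\|\langle Sy,y\rangle\|/\|y\|^2$ for positive $S$), and as written it risks reading as a direct transfer of the Hilbert-space implication $\langle Sy,y\rangle\geq c\|y\|^2 \Rightarrow S\geq c$, which does not carry over verbatim since $\langle\cdot,\cdot\rangle$ is $A$-valued rather than scalar. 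A shorter and more robust route, and the one the paper itself uses in the Mishchenko lemma proof, is to deduce from Cauchy--Schwarz that
\begin{equation*}
C^{-2}\|y\|^2\leq \|\langle TT^*y,y\rangle\|\leq \|TT^*y\|\,\|y\|,
\end{equation*}
so that $TT^*$ is bounded below in norm, and then conclude $0\notin\sigma(TT^*)$ for the positive selfadjoint $TT^*$ --- which gives invertibility by functional calculus without ever asserting the pointwise operator bound. With that adjustment part (1) is airtight, and your part (2), including the computation showing $(T^{-1})^*=(T^*)^{-1}$, is complete and correct.
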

 \begin{lem}Let $T\in \mathcal{L}(M,N).$ The following conditions are equivalent:
 \begin{itemize}
 \item $\operatorname{Ran}T$ is closed in $N$,
 \item $\operatorname{Ran}T^*$ is closed in $M$,
 \item $0$ is isolated in the spectrum of $T^*T$,
 \item $0$ is isolated in the spectrum of $TT^*$.
 \end{itemize}
  \end{lem}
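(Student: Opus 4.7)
The plan splits the lemma into two parts: the algebraic/Hilbert-module statement that $F$ is invertible with $\mathcal{C}_+\mathcal{C}_+^* F^{-1}$ the orthogonal projection onto $\operatorname{range}(\mathcal{C}_+)$, and the analytic/symbolic statement that this projection is an order-zero operator in the Mishchenko--Fomenko calculus with symbol $q_+\otimes\mathbb{1}_{\mathcal{A}}$. For the first part I would fix, by the complementability already quoted in the text, the orthogonal projection $e$ onto the (closed, complementable) range of $\mathcal{C}_+$, and decompose every operator as a $2\times 2$ matrix with respect to $\mathcal{H}=e\mathcal{H}\oplus(1-e)\mathcal{H}$. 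Since $e\mathcal{H}=\operatorname{range}(\mathcal{C}_+)$ is the fixed-point module of the idempotent $\mathcal{C}_+$, we have $\mathcal{C}_+e=e$ (restriction to range is identity) and the lower row of the matrix of $\mathcal{C}_+$ vanishes; so
\[
\mathcal{C}_+=\begin{pmatrix}1 & B\\ 0 & 0\end{pmatrix},\qquad \mathcal{C}_+^{*}=\begin{pmatrix}1 & 0\\ B^{*} & 0\end{pmatrix}
\]
for a unique adjointable $B\in\mathcal{L}\bigl((1-e)\mathcal{H},e\mathcal{H}\bigr)$.

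A direct matrix multiplication then gives
\[
\mathcal{C}_+\mathcal{C}_+^{*}=\begin{pmatrix}1+BB^{*} & 0\\ 0 & 0\end{pmatrix},\qquad (1-\mathcal{C}_+^{*})(1-\mathcal{C}_+)=\begin{pmatrix}0 & 0\\ 0 & 1+B^{*}B\end{pmatrix},
\]
so $F$ is diagonal with strictly positive entries $1+BB^{*}$ and $1+B^{*}B$, both invertible in $\mathcal{L}(e\mathcal{H})$ and $\mathcal{L}((1-e)\mathcal{H})$ respectively by the standard $C^*$-module functional calculus applied to $1+h^*h\ge 1$. Hence $F$ is invertible and $\mathcal{C}_+\mathcal{C}_+^{*}F^{-1}=\left(\begin{smallmatrix}1 & 0\\ 0 & 0\end{smallmatrix}\right)=e$, which both identifies $\mathcal{C}_+^\bot$ with the orthogonal projection onto $\operatorname{range}(\mathcal{C}_+)$ and shows $(\mathcal{C}_+^\bot)^2=\mathcal{C}_+^\bot=(\mathcal{C}_+^\bot)^{*}$ without any further computation.

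For the pseudodifferential part the crucial observation is that the principal symbol of $\mathcal{C}_+$ computed above, $\sigma_0(\mathcal{C}_+)=q_+(y,\eta)\otimes\mathbb{1}_{\mathcal{A}}$, is already a self-adjoint idempotent at the symbol level. Consequently
\[
\sigma_0(F)=q_+q_+^{*}+(1-q_+^{*})(1-q_+)=q_++(1-q_+)=\mathbb{1}\otimes\mathbb{1}_{\mathcal{A}}.
\]
Thus $F$ is an order-zero Mishchenko--Fomenko pseudodifferential operator on $Y$ with invertible principal symbol, i.e.\ elliptic in the MF calculus. A standard parametrix $Q$ yields $QF=1-R$ with $R$ smoothing, and since $F$ is invertible as a bounded operator we may write $F^{-1}=Q+F^{-1}R$; because smoothing operators form an ideal and the MF calculus is closed under composition, $F^{-1}R$ is again smoothing and $F^{-1}$ lies in the order-zero MF calculus with $\sigma_0(F^{-1})=\mathbb{1}\otimes\mathbb{1}_{\mathcal{A}}$.

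Combining the two pieces, $\mathcal{C}_+^\bot=\mathcal{C}_+\mathcal{C}_+^{*}F^{-1}$ is an order-zero MF-pseudodifferential operator on $Y$ with principal symbol $q_+q_+^{*}\cdot\mathbb{1}=q_+\otimes\mathbb{1}_{\mathcal{A}}=\sigma_0(\mathcal{C}_+)$. The step I expect to be most delicate is precisely the passage from algebraic invertibility of $F$ to pseudodifferential invertibility: one must invoke the Mishchenko--Fomenko version of ``elliptic $+$ invertible $\Rightarrow$ inverse is pseudodifferential'', which requires that smoothing operators be a two-sided ideal in the bounded operators on the relevant Hilbert--Sobolev module and that the parametrix construction from the earlier section transplant verbatim; the matrix computation, by contrast, reduces everything to the positivity of $1+hh^{*}$ which is immediate in any $C^*$-algebra.
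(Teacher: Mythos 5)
Your proposal does not address the stated lemma at all. The statement you were asked to prove is the abstract Hilbert--$C^*$-module result that, for $T\in\mathcal{L}(M,N)$, closedness of $\operatorname{Ran}T$, closedness of $\operatorname{Ran}T^*$, $0$ being isolated in $\sigma(T^*T)$, and $0$ being isolated in $\sigma(TT^*)$ are all equivalent. What you have instead proved, and proved with a reasonable argument, is the later lemma of the paper concerning the orthogonalized Calderon projection: invertibility of $F=\mathcal{C}_+\mathcal{C}_+^*+(1-\mathcal{C}_+^*)(1-\mathcal{C}_+)$, the identification of $\mathcal{C}_+\mathcal{C}_+^*F^{-1}$ with the orthogonal projection on $\operatorname{range}(\mathcal{C}_+)$, and its pseudodifferential nature. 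Nothing in your matrix decomposition of the idempotent $\mathcal{C}_+$, nor in the symbolic ellipticity argument for $F$, touches the four-way equivalence about an arbitrary adjointable operator $T$ between arbitrary Hilbert $A$-modules $M,N$.

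For the record, a proof of the actual statement would run roughly as follows. The equivalence of the two spectral conditions is immediate from the general $C^*$-algebra identity $\sigma(T^*T)\setminus\{0\}=\sigma(TT^*)\setminus\{0\}$. If $\operatorname{Ran}T$ is closed, the Mishchenko lemma gives $M=\operatorname{Ker}T\oplus\operatorname{Ran}T^*$; on $\operatorname{Ran}T^*$ the operator $T$ is injective with closed range, hence bounded below, so $T^*T\ge c^2>0$ there while $T^*T=0$ on $\operatorname{Ker}T$, and $0$ is isolated in $\sigma(T^*T)$. Conversely, if $0$ is isolated in $\sigma(T^*T)$, the spectral projection $P$ onto $\{0\}$ satisfies $T^*T\ge c>0$ on $(1-P)M$ and $TP=0$, so $T$ is bounded below on $(1-P)M$ and $\operatorname{Ran}T=T((1-P)M)$ is closed. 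Finally, closedness of $\operatorname{Ran}T$ implies closedness of $\operatorname{Ran}T^*$ via the Mishchenko decomposition (since $\operatorname{Ran}T^*$ appears as a direct summand), and by symmetry the converse also holds. That is the argument the cited sources (Lance, et al.) supply; the paper only recalls it in the appendix without proof.
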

 \begin{lem}[Mishchenko]
 Let $M,N$ Hilbert $A$--modules and $T\in \mathcal{L}(M,N)$ an operator with closed range. Then $\operatorname{Ker}T$ is complemented in $M$, $\operatorname{Ran}T$ is complemented in $N$ and
 $$N=\operatorname{Ker}T^*\oplus \operatorname{Ran}T.$$
 \end{lem}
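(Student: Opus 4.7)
The plan is to reduce everything to a block-matrix calculation on the orthogonal splitting produced by Mishchenko's lemma. Since $\mathcal{C}_+$ is an adjointable idempotent pseudodifferential operator of order zero on the $L^2$ Hilbert module $\mathcal{H}$ over the boundary, its range is closed (as the kernel of the adjointable $1-\mathcal{C}_+$), so Mishchenko's lemma yields the orthogonal decomposition $\mathcal{H}=K\oplus R$ with $K:=\operatorname{Ker}(\mathcal{C}_+^*)$ and $R:=\operatorname{range}(\mathcal{C}_+)$. Because $\mathcal{C}_+^2=\mathcal{C}_+$ the operator is the identity on $R$ and sends $K$ into $R$ via an adjointable $A:K\to R$, giving the block forms
\begin{equation*}
\mathcal{C}_+=\begin{pmatrix}0 & 0\\ A & I_R\end{pmatrix},\qquad \mathcal{C}_+^*=\begin{pmatrix}0 & A^*\\ 0 & I_R\end{pmatrix}.
\end{equation*}

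A direct multiplication then produces
\begin{equation*}
F=\begin{pmatrix}I_K+A^*A & 0\\ 0 & I_R+AA^*\end{pmatrix}\geq I,\qquad \mathcal{C}_+\mathcal{C}_+^*=\begin{pmatrix}0 & 0\\ 0 & I_R+AA^*\end{pmatrix},
\end{equation*}
so $F$ is positive and bounded below by $I$, hence invertible with $F^{-1}=\operatorname{diag}\bigl((I_K+A^*A)^{-1},(I_R+AA^*)^{-1}\bigr)$. Multiplying one reads off $\mathcal{C}_+^{\bot}=\mathcal{C}_+\mathcal{C}_+^* F^{-1}=\operatorname{diag}(0,I_R)$, which is exactly the orthogonal projection of $\mathcal{H}$ onto $R=\operatorname{range}(\mathcal{C}_+)$. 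This settles invertibility of $F$ and the identification of $\mathcal{C}_+^{\bot}$.

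For the pseudodifferential claim, $F$ is a sum of compositions of order zero pseudodifferential operators, hence of order zero in the Mishchenko--Fomenko calculus; using that the boundary operator $B$ is selfadjoint, $q_+$ is an orthogonal fibrewise projection, so $q_+^*=q_+=q_+^2$ and the principal symbol simplifies:
\begin{equation*}
\sigma_0(F)=q_+q_+^*+(1-q_+^*)(1-q_+)=q_++(1-q_+)=\mathbb{1}.
\end{equation*}
Thus $F$ is elliptic of order zero. The main obstacle is to upgrade the existence of the bounded inverse $F^{-1}$ to the statement that $F^{-1}$ belongs to the Mishchenko--Fomenko calculus; this is handled by the Hadamard parametrix trick, exactly as used earlier for $(\widetilde{D}^+)^{-1}$: pick a parametrix $Q$ with $QF=I+\mathcal{K}$ where $\mathcal{K}$ is smoothing, and write $F^{-1}=Q-\mathcal{K}F^{-1}$, so that $F^{-1}$ is pseudodifferential of order zero. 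Composing, $\mathcal{C}_+^{\bot}=\mathcal{C}_+\mathcal{C}_+^* F^{-1}$ is of order zero with principal symbol $q_+\cdot q_+^*\cdot\mathbb{1}=q_+=\sigma_0(\mathcal{C}_+)$, which completes the proof.
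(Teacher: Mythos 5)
Your proposal is a proof of a different statement entirely. The statement under review is the Mishchenko Lemma itself: for a general operator $T\in\mathcal{L}(M,N)$ between Hilbert $A$--modules with closed range, show that $\operatorname{Ker}T$ and $\operatorname{Ran}T$ are orthogonally complemented and that $N=\operatorname{Ker}T^*\oplus\operatorname{Ran}T$. What you have written is instead a proof of the paper's \emph{later} lemma on the orthogonalization of the Calderon projection (the invertibility of $F=\mathcal{C}_+\mathcal{C}_+^*+(1-\mathcal{C}_+^*)(1-\mathcal{C}_+)$ and the identification of $\mathcal{C}_+^{\bot}$ as an orthogonal projection with the same principal symbol). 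None of what you wrote touches the closed-range hypothesis, the generic operator $T$, or the claimed decomposition of $N$.

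Worse, your argument is circular if offered as a proof of the Mishchenko Lemma: at its very first step you invoke ``Mishchenko's lemma yields the orthogonal decomposition $\mathcal{H}=K\oplus R$'' — i.e.\ you assume precisely the statement you were asked to establish. The paper's actual proof of the Mishchenko Lemma is a quantitative estimate argument: it shows that $T_0 T_0^*$ is bounded below on $\operatorname{Ran}T$ (using the open mapping theorem to get $\|y\|\leq\delta^{-2}\|T_0T_0^*y\|$), concludes $T_0T_0^*$ is invertible, and then exhibits every $z\in M$ as a sum of an element of $\operatorname{Ker}T$ and one of $\operatorname{Ran}T_0^*$, with the decomposition of $N$ obtained by running the same argument for $T^*$. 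You should redo the exercise aiming at that statement, with $T$ arbitrary of closed range — nothing about pseudodifferential operators, ellipticity of symbols, or parametrices belongs in that proof.
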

 \begin{proof}Let $N_0:=\operatorname{Ran}T$ and $T_0:M\longrightarrow N_0$ an operator such its action coincides with the action of $T$. By the open mapping theorem $T_0(B_1(M))$ (the image of the unit ball) contains some ball of radius $\delta>0$ in $N_0$. Therefore for every $y\in N_0$ there is an $x\in M$ such that $T_0x=y$ and $\|x\|\leq \delta^{-1}\|y\|.$
 Then $\|T^*_0y\|^2=\|\langle y, T_0T^*_0y\rangle \|\leq \|y\|\cdot \|T_0T_0^* y\|$ hence 
 $$\|y\|^2=\|\langle T_0x,y\rangle \|=\|\langle x,T_0^*y\|\leq \|x\|\cdot \|T^*_0y\|\leq \delta^{-1}\|y\|^{3/2}\|T_0T_0^*y\|^{1/2},$$
 i.e. $$\|Y\|\leq \delta^{-2}\|T_0T_0^*y\|,\quad y\in N_0.$$ Now since $0$ is not in the spectrum of $T_0T_0^*$ we have that $T_0T_0^*$ is invertible and for every $z\in M$ there exists $w\in N_0$ such that $T_0z=T_0T_0^*w.$ Then $z-T_0^*w \in \operatorname{Ker}T$ and
 $$z=(z-T_0^*w)+T_0^*w\in \operatorname{Ker}T+\operatorname{Ran}T_0^*.$$ 
 Since $\operatorname{Ran}T_0^*$ is obviously orthogonal to 
 $\operatorname{Ker}T$, it is a complement for 
 $\operatorname{Ker}T$. 
 This completes the proof of orthogonal complementability for $\operatorname{Ker}T$. Now pass to 
 $\operatorname{Ran}T$. Since 
 $M=\operatorname{Ker}T\oplus \operatorname{Ran}T_0^*$, 
 the submodule 
 $\operatorname{Ran}T_0^*$ is closed. Note that 
 $\operatorname{Ran}T_0^*=\operatorname{Ran}T^*$ so one can apply the previous argument to $T^*$ instead of $T$ which gives the orthogonal decomposition
 $$N=\operatorname{Ker}T^*\oplus \operatorname{Ran}T.$$
 \end{proof}

\bibliographystyle{abbrv}
\bibliography{bibCalderon}

\begin{thebibliography}{10}

\bibitem{noi}
P.~Antonini and F.~Arici.
\newblock In preparation.

\bibitem{aps}
M.~F. Atyiah, V.~K. Patodi, and I.~M. Singer.
\newblock Spectral asymmetry and {R}iemannian geometry {I}.
\newblock {\em Math. Proc. Cambridge Philos. Soc.}, (77):43--69, 1975.

\bibitem{Lesch}
B.~Booss-Bavnek and M.~Lesch.
\newblock The invertible double of elliptic operators.
\newblock {\em Lett. Math. Phys.}, (87):19--46, 2009.

\bibitem{Boos}
B.~Booss-Bavnek and K.~P. Wojciechhowski.
\newblock {\em Elliptic Boundary value problems for {D}irac operators}.
\newblock Birk\"auser Boston, 1993.

\bibitem{calderon}
A.~P. Calderon.
\newblock On an inverse boundary value problem.
\newblock In W.~Meyer and M.~Raupp, editors, {\em Seminar on numerical analysis
  and its applications to continuum physics}. Sociedade Brasileira de
  Matematica, 1980.

\bibitem{connes}
A.~Connes.
\newblock Noncommutative differential geometry.
\newblock {\em Publ. Math. IHES}, (62):41--144, 1985.

\bibitem{cm}
A.~Connes and H.~Moscovici.
\newblock Conjecture de {N}ovikov et groupes hyperboliques.
\newblock {\em C. R. Acad. Sci. Paris S{\'e}r. I Math.}, 307(9):475--480, 1988.

\bibitem{connesskandalis}
A.~Connes and G.~Skandalis.
\newblock The longitudinal index theorem for foliations.
\newblock {\em Publ RIMS, Kyoto Univ.}, (20):1139--1183, 1984.

\bibitem{Lesch2}
B.~Himpel, P.~Kirk, and M.~Lesch.
\newblock Calderon projector for the hessian of the perturbed chern-simons
  function on a 3-manifold with boundary.

\bibitem{kas}
G.~Kasparov.
\newblock Equivariant kk-theory and the novikov conjecture.
\newblock {\em Inventiones Mathematicae}, (91):147--201, 1988.

\bibitem{lance}
E.~Lance.
\newblock {\em Hilbert $C^*$-modules - A toolkit for operator algebraist}.
\newblock Cambridge University Press, 1995.

\bibitem{lp1}
E.~Leichnam and P.~Piazza.
\newblock  {\em Dirac index classes and noncommutative spectral flow}.
\newblock Journal of Functional Analysis 200 (2003) 344--400.

\bibitem{lp2}
E.~Leichnam and P.~Piazza.
\newblock Spectral sections and higher Atiyah--Patodi--Singer index theory on Galois coverings
\newblock Geom. Funct. Anal., 8 (1998), pp. 17--58

\bibitem{mf}
A.~Mischenko and A.~T. Fomenko.
\newblock The index of elliptic operators over {C}*-algebras.
\newblock {\em Izv. Akad. Nauk SSSR Ser. Mat.}, 43(4):831--859, 1979.

\bibitem{roe}
J.~Roe.
\newblock An index theorem on open manifolds {II}.
\newblock {\em J. Diff. Geom.}, 115-136(27), 1988.

\bibitem{Schwartz}
L.~Schwartz.
\newblock Th\'eorie des distributions \'a valeurs vectorielle i,ii.
\newblock {\em Ann. Inst. Fourier}, 7 and 8, 1957 and 1958.

\bibitem{Skand}
G.~Skandalis.
\newblock {C}*-alg\'ebres et modules {H}ilbertiens.
\newblock Cours de {DEA}, {U}niversit\'e Paris 7, 1996.

\bibitem{Ta}
M.~E. Taylor.
\newblock {\em Partial differential equations}, volume 115 of {\em Applied
  Mathematical Sciences}.
\newblock Springer, 1996.

\bibitem{Va}
S.~Vassout.
\newblock Unbounded pseudodifferential operators on {L}ie groupoids.
\newblock {\em J. Functional Analysis}, 2006.


\bibitem{charlotte1}
C.~Wahl.
\newblock The {A}tyiah-{P}atodi-{S}inger index theorem for {D}irac operators
  over {C}*-algebras.
\newblock 	arXiv:0901.0381 [math.DG]

\bibitem{Wo}
S.~Woronowicz and K.~Napi\'orkowski.
\newblock Operator theory in the {C}*-algebra framework.
\newblock {\em Rep. Math. Phys.}, 31(3):353--371, 1992.

\bibitem{Xie}
Z.~Xie and G.~Yu.
\newblock A relative higher index theorem, diffeomorphism and positive scalar
  curvature.
\newblock arXiv preprint http://arxiv.org/pdf/1204.3664.pdf.

\end{thebibliography}
\end{document}